\definecolor{webgreen}{rgb}{0,.5,0}
\definecolor{webbrown}{rgb}{.6,0,0}
\begin{document}

\theoremstyle{plain}
\newtheorem{theorem}{Theorem}
\newtheorem{example}[theorem]{Example}
\newtheorem{corollary}[theorem]{Corollary}
\newtheorem{remark}[theorem]{Remark}
\newtheorem{lemma}[theorem]{Lemma}

\begin{center}
\vskip 1cm{\LARGE\bf
Additional Fibonacci--Bernoulli relations\\
\vskip.1in}\vskip 0.8cm\large
Kunle Adegoke\\
Obafemi Awolowo University\\
Ile-Ife, Nigeria\\
\href{mailto:adegoke00@gmail.com}{\tt adegoke00@gmail.com} \\
\vskip .4cm\large
Robert Frontczak\footnote{Statements and conclusions made in this article are entirely those of the author. They do not necessarily reflect the views of LBBW.} \\
Landesbank Baden-W\"urttemberg\\
Stuttgart,  Germany\\
\href{mailto:robert.frontczak@lbbw.de}{\tt robert.frontczak@lbbw.de} \\
\vskip .4cm\large
Taras Goy\\
Vasyl Stefanyk Precarpathian National University\\
Ivano-Frankivsk, Ukraine\\
\href{mailto:taras.goy@pnu.edu.ua}{\tt taras.goy@pnu.edu.ua}
\end{center}

\vskip .15 in

\begin{abstract} We continue our study on relationships between Fibonacci (Lucas) numbers and Bernoulli numbers and polynomials. The derivations of our results are based on functional equations for the respective generating functions, which in our case are combinations of hyperbolic functions. Special cases and some corollaries will highlight interesting aspects of our 	findings.
\end{abstract}

\section{Introduction}

Fibonacci and Lucas numbers satisfy the linear second-order  recurrence 
\begin{equation*}
u_{n} = u_{n-1} + u_{n-2},\quad n\geq 2,
\end{equation*} 
with initial conditions $F_0 = 0$, $F_1 = 1$ and $L_0 = 2$, $L_1 = 1$, respectively. Both sequences have a long history and are very popular among mathematicians as they appear in important mathematical branches such as number theory, combinatorics and graph theory. They have entries A000045 and A000032 in the On-Line Encyclopedia of Integer Sequences \cite{Sloane}. Excellent references on these sequences are the books \cite{Koshy,Vajda}. 

As usual, Bernoulli polynomials  are defined by the exponential generating function  \cite[Chapter 1, Section 1.3]{Jeff}
\begin{equation*}
H(x,z):= \sum_{n=0}^\infty B_n(x) \frac{z^n}{n!} = \frac{z e^{xz}}{e^z - 1}, \qquad |z|<2\pi. 
\end{equation*} 

For $n\geq0$, they satisfy the following functional relations 
\cite[Chapter 1, Section 1.3]{Jeff}:
\begin{equation}\label{1FuncRel-B}
B_n (1 + x)  - B_n (x) = nx^{n - 1},  
\end{equation}
\begin{equation}\label{2FuncRel-B}
B_n (1 - x) = ( - 1)^n B_n (x),
\end{equation}
\begin{equation}\label{3FuncRel-B}
B_n ( - x) = ( - 1)^n\big(B_n (x) + nx^{n - 1}\big),
\end{equation}
\begin{equation}\label{Raabe-B}   
{B_n (mx)} = {m^{n - 1}}\sum_{k = 0}^{m - 1} {B_n\Big( {x + \frac{k}{m}} \Bigr)},\qquad m \geq1.
\end{equation}

Also, Bernoulli polynomials have the property
\begin{equation*}
B_n (x + z) = \sum_{k = 0}^n {\binom nkB_{n - k} (x)z^{k} },
\end{equation*}
from which we get
\begin{equation}\label{eq.qr64rlf}
B_n (x) = \sum_{k = 0}^n {\binom nk B_{n - k} x^{k} },
\end{equation}
where $B_n=B_n(0)$ are the Bernoulli numbers. The Bernoulli numbers are rational numbers starting with 
\begin{equation*}
B_0=1,\,\,\,\, B_1=-\frac12, \,\,\,\, B_2=\frac16, \,\,\,\, B_4=-\frac1{30}, \,\,\,\, B_6=\frac1{42},
\end{equation*}
and $B_{2n+1}=0$ for $n\geq1$. 

The following identities connect Fibonacci numbers to Bernoulli polynomials and are proved in \cite{RFTG1}: 
for each integers $n\geq 0$, $j\geq 1$, and $x\in\mathbb{C}$,
\begin{equation*}\label{FBpol1}
\sum_{k=0}^n {n\choose k} F_{jk} \big(\sqrt{5}F_j\big)^{n-k} B_{n-k}(x) = n F_j \big ((\sqrt{5}x+\beta)F_j + F_{j-1}\big )^{n-1},
\end{equation*}
\begin{equation*}\label{FBpol2}
\sum_{k=0}^n {n\choose k} F_{jk} \big(-\sqrt{5}F_j\big)^{n-k} B_{n-k}(x) = n F_j \big ((\alpha - \sqrt{5}x)F_j + F_{j-1}\big )^{n-1},
\end{equation*}
where $\alpha=\frac{1+\sqrt{5}}2$ is the golden ratio and $\beta=\frac{1-\sqrt{5}}{2}=-\frac{1}{\alpha}$. Other results in this direction are contained in \cite{Byrd,Castellanos,RF-JIS,RF-Tomovski,GuoChu,Young,Zhang-Ma,Zhang,Zhang-Guo}, among others.

In this paper, we state new relations involving Fibonacci and Lucas numbers and Bernoulli numbers and polynomials. We will work with many exponential generating functions. The results stated are complements of the recent discoveries from \cite{RF-JIS,RFTG1,RFTG2}. Some of our results were announced without proofs in \cite{adegoke_Conf}.

\section{Identities from hidden threefold convolutions}

We begin with a known lemma \cite[Vol.~1, p.~251]{Koshy}.
\begin{lemma}
	\label{lem1}
	Let $n$ and $j$ be positive integers. Then
	\begin{equation}\label{eqlem1}
	\sum_{k=0}^{n} {n\choose k} F_{jk} F_{j(n-k)} = \frac{ 2^n L_{jn} - 2 L_j^n}{5},
	\end{equation}
	\begin{equation}\label{eqlem2}
	\sum_{k=0}^{n} {n\choose k} L_{jk} L_{j(n-k)} = 2^n L_{jn} + 2 L_j^n,
	\end{equation}
	\begin{equation}\label{eqlem3}
	\sum_{k=0}^{n} {n\choose k} F_{jk} L_{j(n-k)} = 2^n F_{jn}.
	\end{equation}
\end{lemma}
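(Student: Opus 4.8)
The plan is to prove all three convolutions uniformly by substituting the Binet forms $F_m = (\alpha^m - \beta^m)/\sqrt{5}$ and $L_m = \alpha^m + \beta^m$, where $\alpha\beta = -1$, and then collapsing the resulting binomial sums with the binomial theorem. This bypasses generating functions entirely and handles the three statements in one stroke, since they differ only in the sign pattern of the cross terms.

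First I would expand the product under the summation sign in \eqref{eqlem1}. Using $\alpha^{jk}\alpha^{j(n-k)} = \alpha^{jn}$ and the analogous identity for $\beta$, one gets
\[
F_{jk}F_{j(n-k)} = \frac{1}{5}\bigl(\alpha^{jn} + \beta^{jn} - \alpha^{jk}\beta^{j(n-k)} - \beta^{jk}\alpha^{j(n-k)}\bigr).
\]
The two ``pure'' terms sum to $\alpha^{jn}+\beta^{jn} = L_{jn}$ and are constant in $k$, so summing against $\binom{n}{k}$ contributes $2^n L_{jn}$ after invoking $\sum_{k=0}^n \binom{n}{k} = 2^n$.

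The crucial step is the treatment of the two mixed terms, which is exactly where the binomial theorem enters. I would write
\[
\sum_{k=0}^n \binom{n}{k} \alpha^{jk}\beta^{j(n-k)} = (\alpha^j + \beta^j)^n = L_j^n,
\]
and symmetrically $\sum_{k=0}^n \binom{n}{k} \beta^{jk}\alpha^{j(n-k)} = L_j^n$. Hence the left-hand side of \eqref{eqlem1} equals $\frac{1}{5}(2^n L_{jn} - 2 L_j^n)$, as claimed.

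The remaining two identities follow by the same mechanism, with only the signs of the mixed terms changing. For \eqref{eqlem2} the Lucas product $L_{jk}L_{j(n-k)}$ produces $+\alpha^{jk}\beta^{j(n-k)} + \beta^{jk}\alpha^{j(n-k)}$, so the two copies of $L_j^n$ add and give $2^n L_{jn} + 2 L_j^n$; for \eqref{eqlem3} the product $F_{jk}L_{j(n-k)}$ yields the mixed terms with opposite signs, so after summation they cancel and the pure terms alone leave $2^n F_{jn}$. I do not anticipate a genuine obstacle here; the only care needed is tracking the four sign patterns of the cross terms across the three cases, since that single difference is precisely what distinguishes the three right-hand sides.
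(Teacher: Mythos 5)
Your argument is correct: the Binet substitution together with the binomial theorem in the form $\sum_{k=0}^n \binom{n}{k}\alpha^{jk}\beta^{j(n-k)} = (\alpha^j+\beta^j)^n = L_j^n$ handles all three identities, and the sign bookkeeping you describe for the cross terms is exactly what distinguishes \eqref{eqlem1}, \eqref{eqlem2} and \eqref{eqlem3}. Note, however, that the paper does not prove Lemma \ref{lem1} at all: it quotes it as a known result from Koshy's book, so your computation is not an alternative to an in-paper argument but a self-contained replacement for an external citation. It is also worth observing that your proof is precisely the coefficient-level version of the exponential generating function identities the paper derives immediately afterwards: writing $F(z)=\sum_{n\ge 0}F_{jn}\frac{z^n}{n!}=\frac{1}{\sqrt5}\big(e^{\alpha^j z}-e^{\beta^j z}\big)$ and $L(z)=e^{\alpha^j z}+e^{\beta^j z}$, the products $F^2(z)$, $L^2(z)$ and $F(z)L(z)$ expand via the Cauchy product into the left-hand sides of \eqref{eqlem1}--\eqref{eqlem3}, while the cross term $e^{\alpha^j z}e^{\beta^j z}=e^{L_j z}$ plays exactly the role of your binomial-theorem step; this is how the functional equations \eqref{egF} and \eqref{egL} encode the lemma. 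So your route gives a short, direct verification, whereas the generating-function packaging (which the paper takes for granted here) is what the subsequent theorems actually exploit, since they manipulate those functional equations analytically before comparing coefficients.
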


Our first main result is the following theorem. 
\begin{theorem} \label{thm1} Let $n$ and $j$ be positive integers. Then
	\begin{equation} \label{thm1-1}
	\sum_{\substack{k=0 \\ n - k \equiv 0 \,(\mathrm {mod} \,2)}}^n \binom{n}{k} 
	\big ( 2^k L_{jk} - 2 L_j^k \big ) (\sqrt{5} F_j)^{n-k} \frac{B_{n-k+2}}{n-k+2}= \frac{2^{n+2}L_{j(n+2)}-2L_j^{n+2}}{5(n+1)(n+2)F_j^2} - L_j^n, 
	\end{equation}
	\begin{equation} \label{thm1-2}
	\sum_{\substack{k=0 \\ n - k \equiv 0 \,(\mathrm {mod} \,2)}}^n \binom{n}{k} 
	\big ( 2^k L_{jk} + 2 L_j^k \big ) (\sqrt{5} F_j)^{n-k} \frac{2^{n-k+2}-1}{n-k+2} B_{n-k+2} = L_j^n,
	\end{equation}
	and
	\begin{align}\label{thm1-3}
	\sum_{\substack{k=0 \\ n - k \equiv 0 \,(\mathrm {mod} \,2)}}^n\!\!\binom{n}{k} 2^k F_{jk}& (\sqrt{5} F_j)^{n-k} \frac{B_{n-k+2}}{n-k+2}+ \frac{2}{\sqrt{5}} \sum_{\substack{k=0 \\ n - k \equiv 1 \,(\mathrm {mod} \,2)}}^{n-1}\!\!\binom{n}{k} L^k_{j} (\sqrt{5} F_j)^{n-k} 
	\frac{B_{n-k+1}}{n-k+1}\notag\\
	&= \frac{2^{n+3}F_{j(n+2)}}{5(n+1)(n+2)F_j^2} - \frac{2L_j^{n+1}}{5(n+1)F_j}.
	\end{align}
\end{theorem}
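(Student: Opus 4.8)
The plan is to realize each sum as the coefficient of $z^n/n!$ in a product of two exponential generating functions — an ``outer'' factor built from the Fibonacci/Lucas weights and an ``inner'' factor built from the Bernoulli weights — and then to exploit a cancellation between the hyperbolic functions appearing in the two factors. Throughout I write $\alpha^j-\beta^j=\sqrt{5}\,F_j$ and $\alpha^j+\beta^j=L_j$, so that the Binet forms give the three outer generating functions
\begin{align*}
\sum_{k\ge0}\big(2^kL_{jk}-2L_j^k\big)\frac{z^k}{k!} &= \big(e^{\alpha^jz}-e^{\beta^jz}\big)^2 = 4e^{L_jz}\sinh^2\!\Big(\tfrac{\sqrt{5}F_j}{2}z\Big),\\
\sum_{k\ge0}\big(2^kL_{jk}+2L_j^k\big)\frac{z^k}{k!} &= \big(e^{\alpha^jz}+e^{\beta^jz}\big)^2 = 4e^{L_jz}\cosh^2\!\Big(\tfrac{\sqrt{5}F_j}{2}z\Big),\\
\sum_{k\ge0}2^kF_{jk}\frac{z^k}{k!} &= \frac{e^{2\alpha^jz}-e^{2\beta^jz}}{\sqrt{5}} = \frac{2}{\sqrt{5}}e^{L_jz}\sinh\!\big(\sqrt{5}F_jz\big),
\end{align*}
which are precisely the threefold-convolution coefficients underlying Lemma~\ref{lem1}.

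Next I would record the inner generating functions, obtained from the Bernoulli series $z/(e^z-1)=\sum_mB_mz^m/m!$ by index shifting. Using $(e^w-1)^2=e^w\cdot4\sinh^2(w/2)$ one gets
\begin{equation*}
\sum_{m\ge0}\frac{B_{m+2}}{m+2}\frac{w^m}{m!} = \frac{1}{w^2}-\frac{1}{4\sinh^2(w/2)},\qquad
\sum_{m\ge0}\frac{B_{m+1}}{m+1}\frac{w^m}{m!} = \frac{1}{e^w-1}-\frac1w,
\end{equation*}
while combining $\sum_m(2^m-1)B_mw^m/m!=-w/(e^w+1)$ with $\tfrac{e^w-1}{e^w+1}=\tanh(w/2)$ yields
\begin{equation*}
\sum_{m\ge0}\frac{(2^{m+2}-1)B_{m+2}}{m+2}\frac{w^m}{m!} = \frac{1}{4\cosh^2(w/2)}.
\end{equation*}
Since $B_{2r+1}=0$ for $r\ge1$, the first and third series are even in $w$, so the constraint $n-k\equiv0\pmod2$ in \eqref{thm1-1} and \eqref{thm1-2} merely discards vanishing odd terms; for \eqref{thm1-3} I use the odd part $\tfrac12\coth(w/2)-\tfrac1w$ of the middle series to handle the second sum.

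With these in hand, each identity follows by substituting $w=\sqrt{5}F_jz$ and reading off the coefficient of $z^n/n!$ in the product outer$\times$inner. The decisive point is that the hyperbolic factors telescope. For \eqref{thm1-1} the $\sinh^2$ cancels and the product collapses to $\tfrac{1}{5F_j^2z^2}\big(e^{\alpha^jz}-e^{\beta^jz}\big)^2-e^{L_jz}$, whose coefficient of $z^n/n!$ is exactly the stated right-hand side: the first term contributes the shifted convolution coefficient $\tfrac{2^{n+2}L_{j(n+2)}-2L_j^{n+2}}{5(n+1)(n+2)F_j^2}$ and the second contributes $L_j^n$. For \eqref{thm1-2} the $\cosh^2$ cancels completely, leaving exactly $e^{L_jz}$ and hence $L_j^n$. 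For \eqref{thm1-3} the two sums are engineered so that the $\tfrac12\coth$ term coming from $\sinh(\sqrt{5}F_jz)/\big(4\sinh^2(\tfrac{\sqrt{5}F_j}{2}z)\big)$ cancels against the $\tfrac12\coth$ of the odd part, collapsing everything to $\tfrac{1}{5F_j^2z^2}\cdot\tfrac{2}{\sqrt{5}}e^{L_jz}\sinh(\sqrt{5}F_jz)-\tfrac{2}{5F_jz}e^{L_jz}$, from which coefficient extraction gives the right-hand side.

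The main obstacle is twofold. First, establishing the three inner generating-function identities cleanly — especially the $(2^{m+2}-1)B_{m+2}$ one, which needs the reduction $\tfrac{2}{e^{2w}-1}-\tfrac{1}{e^w-1}=-\tfrac{1}{e^w+1}$ followed by a shift/integration, together with the odd-part computation via $\tfrac{1}{e^{-w}-1}=-\tfrac{e^w}{e^w-1}$ giving $\coth(w/2)$. Second, one must justify the bookkeeping near $w=0$: each inner factor carries a $1/w^2$ or $1/w$ singular term, but these are annihilated by the matching zeros of the outer factor (the $\sinh^2$ and $\sinh$ prefactors), so every displayed product is a genuine power series and the coefficient of $z^n/n!$ is well defined for $n\ge1$. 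Once these auxiliary identities and the parity/pole bookkeeping are settled, the cancellations render the extraction routine.
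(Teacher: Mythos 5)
Your method is sound and is essentially the paper's own proof in tidier packaging: your three ``outer'' generating functions are exactly the paper's functional equations \eqref{egF}, \eqref{egL} (and $F(z)L(z)$ via \eqref{eqlem3}), your ``inner'' closed forms $\sum_{m}\frac{B_{m+2}}{m+2}\frac{w^m}{m!}=\frac1{w^2}-\frac1{4\sinh^2(w/2)}$, $\sum_m\frac{(2^{m+2}-1)B_{m+2}}{m+2}\frac{w^m}{m!}=\frac{1}{4\cosh^2(w/2)}$ are precisely the paper's Bernoulli series for $1/\sinh^2$, $1/\cosh^2$ and $\coth$, and ``collapse then compare coefficients'' is the paper's Cauchy-product argument for \eqref{fl_id} rearranged. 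You also correctly isolate the one delicate parity point that the paper only handles implicitly: in the second sum of \eqref{thm1-3} the restriction $n-k\equiv1\pmod 2$ is \emph{not} vacuous (it removes the nonzero $B_1$ term at $k=n$), which is why the odd part $\tfrac12\coth(w/2)-\tfrac1w$ is the right inner factor there.

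However, your final assertion for \eqref{thm1-3} is not literally true as stated. Coefficient extraction from your (correct) collapsed form $\frac{1}{5F_j^2z^2}\cdot\frac{2}{\sqrt5}e^{L_jz}\sinh(\sqrt5F_jz)-\frac{2}{5F_jz}e^{L_jz}$ gives
\begin{equation*}
\frac{2^{n+2}F_{j(n+2)}}{5(n+1)(n+2)F_j^2}-\frac{2L_j^{n+1}}{5(n+1)F_j},
\end{equation*}
whereas the right-hand side printed in \eqref{thm1-3} has $2^{n+3}$. The printed exponent is in fact a typo in the paper: at $n=1$, $j=1$ the left-hand side of \eqref{thm1-3} equals $\frac16+\frac16=\frac13$, and $\frac{2^{3}F_3}{30}-\frac2{10}=\frac13$ while $\frac{2^{4}F_3}{30}-\frac2{10}=\frac{13}{15}$; moreover the paper's own $j=1$ specialization, $\frac{2}{5(n+1)}\big(\frac{2^{n+1}F_{n+2}}{n+2}-1\big)$, corresponds to $2^{n+2}$, not $2^{n+3}$. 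So your argument is correct, but you should state explicitly that it establishes the identity with $2^{n+2}$ (i.e., that the theorem as printed needs this correction), rather than claiming that your extraction reproduces the printed right-hand side; as written, that claim would fail the moment anyone checks the coefficient.
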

\begin{proof} Let $F(z)$ and $L(z)$ denote the exponential generating functions of sequences $(F_{jn})_{n\geq 0}$ and $(L_{jn})_{n\geq 0}$, respectively, with  $j\geq 1$. Then, it is easy to derive
	\begin{equation}\label{egF}
	F^2(z) = \frac{4}{5} e^{L_j z}\sinh^2 \Big ( \frac{\sqrt{5}F_j}{2}  z\Big), 
	\end{equation}
	\begin{equation}\label{egL}
	L^2(z) = 4 e^{L_j z}\cosh^2 \Big ( \frac{\sqrt{5}F_j}{2}  z\Big ).
	\end{equation}
	
	From the power series for the cotangent \cite[Chapter 1.3]{Jeff}
	\begin{equation*}
	\coth z = \sum_{n=0}^\infty 2^{2n} B_{2n} \frac{z^{2n-1}}{(2n)!}
	\end{equation*}
	we get
	\begin{equation}\label{cot}
	-\frac{d}{dz} \coth z = \frac{1}{\sinh^2 z} = \frac{1}{z^2} - \sum_{n=1}^\infty (2n-1) 2^{2n} B_{2n} \frac{z^{2n-2}}{(2n)!}\,.
	\end{equation}
	
	Hence, we see that the functional equation \eqref{egF}, using \eqref{eqlem1} and \eqref{cot}, can be written equivalently as \begin{equation*}
	e^{L_j z} = S_1(z) - S_2(z)
	\end{equation*}
	with
	\begin{equation*}
	S_1(z) = \frac{1}{5F_j^2} \sum_{n=0}^\infty \big ( 2^n L_{jn} - 2 L_j^n \big ) \frac{z^{n-2}}{n!}\,,
	\end{equation*}
	\begin{equation*}
	S_2(z) = \sum_{n=0}^\infty \big ( 2^n L_{jn} - 2 L_j^n \big ) \frac{z^{n}}{n!}\cdot
	\sum_{n=1}^\infty (2n-1)  5^{n-1} F_j^{2n-2}B_{2n} \frac{z^{2n-2}}{(2n)!}\,.
	\end{equation*}
	
	In the series $S_1(z)$ the first two terms are zero and therefore
	\begin{equation*}
	S_1(z) = \frac{1}{5F_j^2} \sum_{n=0}^\infty \frac{2^{n+2} L_{j(n+2)} - 2 L_j^{n+2}}{(n+2)(n+1)} \frac{z^{n}}{n!}\,.
	\end{equation*}
	
	The second series $S_2(z)$ equals
	\begin{equation*}
	S_2(z) = \sum_{n=0}^\infty \big ( 2^n L_{jn} - 2 L_j^n \big ) \frac{z^{n}}{n!}\cdot
	\sum_{n=0}^\infty (2n+1) 5^{n} F_j^{2n} B_{2n+2} \frac{z^{2n}}{(2n+2)!}
	\end{equation*}
	and is a simple Cauchy product. Expanding and comparing the coefficients of $z^n$ proves \eqref{thm1-1}. 
	
	Identity \eqref{thm1-2} follows from the functional equation \eqref{thm1-2} combined with \eqref{eqlem2} and
	\begin{equation*}
	\frac{1}{\cosh^2 z} = \frac{d}{dz} \tanh z = \sum_{n=1}^\infty (2n-1) 2^{2n} (2^{2n}-1) B_{2n} \frac{z^{2n-2}}{(2n)!}\,.
	\end{equation*}
	
	The underlying functional equation for identity \eqref{thm1-3} is
	\begin{equation}\label{fl_id}
	\frac{F(z)L(z)}{\sinh^2\!\Big ( \frac{\sqrt{5}F_j}{2}  z\Big )} = \frac{4}{\sqrt{5}} e^{L_j z} \coth\!\Big ( \frac{\sqrt{5}F_j}{2}  z\Big ).
	\end{equation}
	
	By \eqref{eqlem3}, \eqref{egF} and \eqref{egL}, the LHS of \eqref{fl_id} is 
	\begin{gather*}
	\frac{F(z)L(z)}{\sinh^2\!\Big ( \frac{\sqrt{5}F_j}{2}  z\Big )} = 
	\frac{4}{5F_j^2} \sum_{n=0}^\infty 2^n F_{jn}\frac{z^{n-2}}{n!} - 4\sum_{n=0}^\infty 2^n F_{jn}\frac{z^n}{n!}\cdot
	\sum_{n=0}^\infty \frac{B_{2n+2}5^{n} F_j^{2n}}{2n+2}  \frac{z^{2n}}{(2n)!} \\
	= \frac{8}{5F_j z} + 4\left(\frac{1}{5F_j^2} \sum_{n=0}^\infty 2^{n+2} \frac{F_{j(n+2)}}{(n+2)(n+1)}\frac{z^{n}}{n!}- \sum_{n=0}^\infty 2^n F_{jn}\frac{z^n}{n!}\cdot
	\sum_{n=0}^\infty \frac{B_{2n+2}}{2n+2} 5^{n} F_j^{2n} \frac{z^{2n}}{(2n)!}\right),
	\end{gather*}
	whereas the RHS of \eqref{fl_id} equals
	\begin{gather*}
	\frac{4}{\sqrt{5}} e^{L_j z} \coth \Big ( \frac{\sqrt{5}F_j}{2}  z\Big ) = \frac{4}{\sqrt{5}}\!\left ( \frac{2}{\sqrt{5}F_j} \sum_{n=0}^\infty L_{j}^n \frac{z^{n-1}}{n!} + \sum_{n=0}^\infty L_{j}^n \frac{z^n}{n!}
	\cdot 2 \sum_{n=1}^\infty B_{2n} 5^{\frac{2n-1}{2}} F_j^{2n-1} \frac{z^{2n-1}}{(2n)!}\right) \\
	=\frac{8}{5F_j z} + \frac{8}{\sqrt{5}} \left( \frac{1}{\sqrt{5}F_j} \sum_{n=0}^\infty \frac{L_{j}^{n+1}}{n+1}\frac{z^{n}}{n!}+\, z \sum_{n=0}^\infty L_{j}^n \frac{z^n}{n!}\cdot
	\sum_{n=0}^\infty \frac{B_{2n+2}}{(2n+2)(2n+1)} 5^{\frac{2n+1}{2}} F_j^{2n+1} \frac{z^{2n}}{(2n)!}\right).
	\end{gather*}
	Now, we can apply the Cauchy multiplication theorem on both sides. When simplifying the RHS, use $\binom{n-1}{k}\frac{n}{n-k} = \binom{n}{k}$. 
\end{proof}

Formula \eqref{thm1-1} has been derived recently in \cite{GuoChu}. 

When $j=1$, then the special cases of Theorem \ref{thm1} reduce to
\begin{equation}\label{FQproblem}
\sum_{\substack{k=0 \\ n - k \equiv 0 \,(\mathrm {mod} \,2)}}^n \binom{n}{k} 
\big ( 2^k L_{k} - 2 \big ) (\sqrt{5})^{n-k} \frac{B_{n-k+2}}{n-k+2} 
= \frac{2^{n+2}L_{n+2}-2}{{5}(n+1)(n+2)} - 1\,,
\end{equation}
\begin{equation*}
\sum_{\substack{k=0 \\ n - k \equiv 0 \,(\mathrm {mod} \,2)}}^n \binom{n}{k} 
\big ( 2^k L_{k} + 2 \big ) (\sqrt{5})^{n-k} \frac{2^{n-k+2}-1}{n-k+2} B_{n-k+2} = 1
\end{equation*}
and
\begin{align*}
\sum_{\substack{k=0 \\ n - k \equiv 0 \,(\mathrm {mod} \,2)}}^n \binom{n}{k} 2^k F_{k} (\sqrt{5})^{n-k} &\frac{B_{n-k+2}}{n-k+2} + \frac{2}{\sqrt{5}} \sum_{\substack{k=0 \\ n - k \equiv 1 \,(\mathrm {mod} \,2)}}^{n-1} \binom{n}{k} (\sqrt{5})^{n-k} 
\frac{B_{n-k+1}}{n-k+1}\\
& = \frac{2}{5(n+1)}\Big ( \frac{2^{n+1}F_{n+2}}{n+2} - 1 \Big ).
\end{align*}

The identity \eqref{FQproblem} appeared as a problem proposal in \cite{RF-FQ}. 
\begin{remark} Using the fact that, for any sequence $(a_n)_{n\geq 0}$,
	\begin{equation*}
	\sum_{\substack{k=0 \\ n - k \equiv 0 \,(\mathrm {mod} \,2)}}^n a_k = \sum_{k=0}^n \frac{1+(-1)^{n-k}}{2} a_k,\qquad
	\sum_{\substack{k=0 \\ n - k \equiv 1 \,(\mathrm {mod} \,2)}}^n a_k = \sum_{k=0}^n \frac{1-(-1)^{n-k}}{2} a_k\,,
	\end{equation*}
	identities \eqref{thm1-1}--\eqref{thm1-3} possess the following equivalent forms without the mod-notation {\rm(}with $n$ even{\rm )}:
	\begin{equation*}
	\sum_{k=0}^{n/2} \binom{n}{2k} \frac{n-2k-1}{(k+1)(2k+1)} \cdot\frac{L_j^{2k+2}-2^{2k+1}L_{2j(k+1)}}{(5F_j^2)^{k+1}}\, B_{n-2k} 
	= \Big (\frac{L_j}{\sqrt{5}F_j}\Big )^n,
	\end{equation*}
	\begin{equation*}
	\sum_{k=0}^{n/2} \binom{n}{2k} \frac{2^{n-2k+2}-1}{n-2k+2} \cdot\frac{2L_j^{2k}+2^{2k}L_{2jk}}{(5F_j^2)^{k}}\, B_{n-2k+2} 
	= \Big (\frac{L_j}{\sqrt{5}F_j}\Big )^n,
	\end{equation*}
	\begin{equation*}
	\sum_{k=0}^{n/2} \binom{n}{2k} \Big (\frac{4}{5F_j^2}\Big )^k \Big (\frac{n-2k-1}{2k+1} F_{j(2k+1)} + \frac{F_j L_j^{2k}}{4^k}\Big )\, B_{n-2k} = 0\,.
	\end{equation*}
\end{remark}
\begin{theorem}\label{thm2}
	For all positive integers $n$ and $j$,
	\begin{equation}\label{main4}
	\sum_{k=0}^n \binom{n}{k} \frac{2^k}{k+1} \left((-1)^k\frac{F_{j(k+1)}}{L^k_j}\Bigl( \frac{L_j}{\sqrt5F_j}\Bigr)^n 
	- (1+(-1)^n) \frac{2^{k+3}-2}{k+2} F_j B_{k+2}\right)=0\,.
	\end{equation}
	In particular,
	\begin{equation*}
	\sum\limits_{k = 1}^{2n} {( - 1)^k \binom{2n - 1}{k - 1}\frac{{2^kF_{jk} }}{{kL_j^k }} }  = 0\,.
	\end{equation*}
	Similarly, 
	\begin{equation}\label{main5}
	\sum_{k=0}^{n}\binom{n}{k}2^k\left((-1)^k\frac{L_{jk}}{L^k_j}\Bigl(\frac{L_j}{\sqrt5F_j}\Bigr)^n-\frac{1+(-1)^n}{n-k+1}B_k\right) = 1+(-1)^n\,,
	\end{equation}
	\begin{equation*}
	\sum\limits_{k = 0}^{2n - 1} {( - 1)^k \binom{2n - 1}k\frac{{2^kL_{jk} }}{{L_j^k }} } = 0\,.
	\end{equation*}
\end{theorem}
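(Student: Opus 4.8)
The plan is to treat each of \eqref{main4} and \eqref{main5} as a sum of two pieces — a ``Fibonacci/Lucas piece'' carrying the factor $(L_j/(\sqrt5 F_j))^n$ and a ``Bernoulli piece'' carrying the factor $(1+(-1)^n)$ — and to evaluate each piece as the $z^n$-coefficient of a product of exponential generating functions with $e^z$, exactly in the spirit of the proof of Theorem~\ref{thm1}. Throughout I will use the closed forms $F(z)=\tfrac{2}{\sqrt5}e^{L_jz/2}\sinh(\tfrac{\sqrt5 F_j}{2}z)$ and $L(z)=2e^{L_jz/2}\cosh(\tfrac{\sqrt5 F_j}{2}z)$, which follow from \eqref{egF}--\eqref{egL} (or directly from Binet's formulas), together with the elementary fact that $\sum_{k}\binom{n}{k}a_k=n!\,[z^n]\,(A(z)e^z)$ whenever $A(z)=\sum_k a_kz^k/k!$.

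For the Lucas piece of \eqref{main5} I would set $a_k=(-2)^kL_{jk}/L_j^k$, so that $A(z)=L(-2z/L_j)=2e^{-z}\cosh(\tfrac{\sqrt5 F_j}{L_j}z)$ and hence $A(z)e^z=2\cosh(\tfrac{\sqrt5 F_j}{L_j}z)$. Since this is an even function, its odd-order Taylor coefficients vanish and its even-order coefficients equal $(\sqrt5 F_j/L_j)^n$; multiplying by $(L_j/(\sqrt5 F_j))^n$ yields exactly $1+(-1)^n$. The same mechanism drives the Fibonacci piece of \eqref{main4}: with $b_k=\tfrac{(-2)^k}{k+1}\tfrac{F_{j(k+1)}}{L_j^k}$ one finds $B(z)=\sum_k b_kz^k/k!=F(-2z/L_j)/(-2z/L_j)=\tfrac{L_j}{\sqrt5}\,e^{-z}\tfrac{\sinh(\sqrt5 F_jz/L_j)}{z}$, so that $B(z)e^z=\tfrac{L_j}{\sqrt5}\tfrac{\sinh(\sqrt5 F_jz/L_j)}{z}$ is again even, contributing $\tfrac{F_j}{n+1}$ for even $n$ and $0$ for odd $n$. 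This is precisely what produces the factor $1+(-1)^n$ in the statements and, for odd $n$, forces the Fibonacci/Lucas pieces (and hence the whole identities) to vanish; the two ``in particular'' corollaries are then just the specializations $n\mapsto 2n-1$ of \eqref{main4} and of the Lucas piece of \eqref{main5}, after reindexing $m=k+1$.

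It remains to evaluate the two Bernoulli pieces for even $n$, and this I expect to be the main obstacle. For \eqref{main5}, using $\binom{n}{k}/(n-k+1)=\binom{n+1}{k}/(n+1)$ reduces its Bernoulli piece to $\tfrac{1}{n+1}\sum_k\binom{n+1}{k}2^kB_k$, and the EGF identity $e^z\cdot\tfrac{2z}{e^{2z}-1}=\tfrac{z}{\sinh z}$ shows $\sum_k\binom{N}{k}2^kB_k=N!\,[z^N]\tfrac{z}{\sinh z}$, which vanishes for odd $N=n+1$ (the stray term $k=n+1$ is harmless since $B_{n+1}=0$); hence \eqref{main5} reads $(1+(-1)^n)-0=1+(-1)^n$, as claimed. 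The reduction for \eqref{main4} is more delicate: the double shift is handled by $\binom{n}{k}/((k+1)(k+2))=\binom{n+2}{k+2}/((n+1)(n+2))$ and the substitution $m=k+2$, which rewrites its Bernoulli piece in terms of $\sum_m\binom{n+2}{m}(4^m-2^m)B_m$. The crucial and least obvious step is the hyperbolic collapse
\[
\frac{4z e^z}{e^{4z}-1}-\frac{2z e^z}{e^{2z}-1}=\frac{z}{\sinh z}-\frac{z}{\cosh z}-\frac{z}{\sinh z}=-\frac{z}{\cosh z},
\]
an even function, whence $\sum_m\binom{N}{m}(4^m-2^m)B_m=-N!\,[z^N]\tfrac{z}{\cosh z}$ vanishes for even $N=n+2$. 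One then restores the two low-order terms dropped by the shift $m\ge 2$, namely $m=0$ (contributing $0$) and $m=1$ (contributing $-(n+2)$ via $B_1=-\tfrac12$), which accounts for the residual $\tfrac{1}{2(n+1)}$ that cancels the Fibonacci piece $\tfrac{F_j}{n+1}$ and gives RHS $=\tfrac{F_j}{n+1}-\tfrac{F_j}{n+1}=0$. Verifying the collapse above and bookkeeping these boundary terms is where essentially all the work lies; everything else is routine coefficient comparison.
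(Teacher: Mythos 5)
Your proof is correct, but it is organized quite differently from the paper's. The paper never evaluates either half of \eqref{main4} or \eqref{main5} in closed form: it rewrites $\sinh z=\tanh z\cdot\cosh z$ and $\cosh z=\coth z\cdot\sinh z$ as the functional equations $\frac{\sqrt5}{2}F(z)e^{-L_jz/2}=\tanh\big(\frac{\sqrt5F_j}{2}z\big)\cosh\big(\frac{\sqrt5F_j}{2}z\big)$ and $\frac12L(z)e^{-L_jz/2}=\coth\big(\frac{\sqrt5F_j}{2}z\big)\sinh\big(\frac{\sqrt5F_j}{2}z\big)$, expands both sides as Cauchy products (Fibonacci/Lucas terms on the left, $\tanh$/$\coth$ Bernoulli series on the right, with the $(1+(-1)^k)/2$ parity bookkeeping done inside the products), and the identities \emph{are} the resulting coefficient comparisons. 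You instead split each identity into its Fibonacci/Lucas half and its Bernoulli half and evaluate each half separately via $\sum_k\binom{n}{k}a_k=n!\,[z^n]\big(A(z)e^z\big)$. Your evaluation of the Fibonacci/Lucas halves rests on the same cancellation that powers the paper's functional equations ($F(z)e^{-L_jz/2}\propto\sinh$, $L(z)e^{-L_jz/2}\propto\cosh$), but your treatment of the Bernoulli halves uses ingredients the paper never needs: the collapse identities $\frac{2ze^z}{e^{2z}-1}=\frac{z}{\sinh z}$ and $\frac{4ze^z}{e^{4z}-1}-\frac{2ze^z}{e^{2z}-1}=-\frac{z}{\cosh z}$ (both of which I checked and which are correct), together with the boundary terms $m=0$ (zero), $m=1$ (contributing $-(n+2)$ via $B_1=-\frac12$), and the stray term $B_{n+1}=0$. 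Your arithmetic works out: each half of \eqref{main4} equals $\frac{F_j}{n+1}$ for even $n$ and $0$ for odd $n$, and each half of \eqref{main5} equals $1+(-1)^n$ minus the stated right side. What your route buys is these closed forms for the individual halves (information invisible in the paper's comparison-only argument) and a transparent derivation of the two ``in particular'' identities as the odd-index specializations; what it costs is the extra Bernoulli-sum evaluations, which is indeed where the real work sits.

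Two slips in your wording, neither of which damages the argument. First, $-\frac{z}{\cosh z}$ is an \emph{odd} function, not an even one; oddness is exactly what you need and what you implicitly use, since it is what makes the even-order coefficients (your $N=n+2$) vanish. Second, the even-order Taylor coefficients of $A(z)e^z=2\cosh\big(\frac{\sqrt5F_j}{L_j}z\big)$ are $2\big(\frac{\sqrt5F_j}{L_j}\big)^n$, not $\big(\frac{\sqrt5F_j}{L_j}\big)^n$; that factor $2$ is precisely what yields $1+(-1)^n$ after multiplication by $\big(\frac{L_j}{\sqrt5F_j}\big)^n$, as your final claim correctly asserts.
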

\begin{proof} From the basic identity $\sinh z = \tanh z\cdot \cosh z$
	we get
	\begin{align}\label{Formula1-main}
	\frac{\sqrt5}{2}F(z)e^{-\frac{L_j}{2}z} = \tanh \Big(\frac{\sqrt5 F_j}{2}z\Big) \cosh \Big(\frac{\sqrt5 F_j}{2}z\Big).
	\end{align}
	Then
	\begin{align*}
	\text{LHS of \eqref{Formula1-main}}&=
	\frac{\sqrt5}{2}\sum_{n=1}^{\infty}F_{jn}\frac{z^n}{n!}\cdot 
	\sum_{n=0}^{\infty}\Big(-\frac{L_j}{2}\Big)^n\frac{z^n}{n!}\\
	&=
	\frac{\sqrt5}{2}\sum_{n=0}^{\infty}F_{j(n+1)}\frac{z^{n+1}}{(n+1)!} 
	\cdot\sum_{n=0}^{\infty}\Big(-\frac{L_j}{2}\Big)^n\frac{z^n}{n!}\\
	&=
	\frac{\sqrt5}{2}z\sum_{n=0}^{\infty}\sum_{k=0}^{n}\binom{n}{k}\frac{F_{j(k+1)}}{k+1} 
	\Big(-\frac{L_j}{2}\Big)^{n-k}\frac{z^n}{n!}\,,
	\end{align*}
	whereas
	\begin{align*}
	\text{RHS of \eqref{Formula1-main}}&=\sum_{n=1}^{\infty}2^{2n}(2^{2n}-1)\Big(\frac{\sqrt5 F_j}{2}\Big)^{2n-1}B_{2n}\frac{z^{2n-1}}{(2n)!}\cdot \sum_{n=0}^{\infty}\Big(\frac{\sqrt5 F_j}{2}\Big)^{2n}\frac{z^{2n}}{(2n)!}\\
	&=\sum_{n=0}^{\infty}2^{2n+2}(2^{2n+2}-1)\Big(\frac{\sqrt5 F_j}{2}\Big)^{2n+1}B_{2n+2}\frac{z^{2n+1}}{(2n+2)!}\cdot\sum_{n=0}^{\infty}\Big(\frac{\sqrt5 F_j}{2}\Big)^{2n}\frac{z^{2n}}{(2n)!}\\
	&=z\sum_{n=0}^{\infty}\sum_{k=0}^{n}\frac{1+(-1)^n}{2}\cdot \frac{1+(-1)^k}{2}\,2^{k+2}(2^{k+2}-1)\\
	&\,\quad\times\Big(\frac{\sqrt5 F_j}{2}\Big)^{k+1}\frac{B_{k+2}}{(k+2)!}\Big(\frac{\sqrt5 F_j}{2}\Big)^{n-k}\frac{z^{n }}{(n-k)!}\\
	&=z\,\Big(\frac{\sqrt5 F_j}{2}\Big)^{n+1}\sum_{n=0}^{\infty}\sum_{k=0}^{n}\frac{1+(-1)^n}{2}\cdot \frac{1+(-1)^k}{2}\\
	&\,\quad\times\binom{n}{k}2^{k+2}(2^{k+2}-1)\frac{B_{k+2}}{(k+2)(k+1)}\frac{z^{n}}{n!}\\
	&=z\,\Big(\frac{\sqrt5 F_j}{2}\Big)^{n+1}\sum_{n=0}^{\infty}\sum_{k=0}^{n}\frac{1+(-1)^n}{2}\binom{n}{k}2^{k+2}(2^{k+2}-1)
	\frac{B_{k+2}}{(k+2)(k+1)}\frac{z^{n}}{n!}.
	\end{align*}
	
	Note that above we used
	\begin{equation*}
	\sum_{n=0}^{\infty}a_{2n}z^{2n}\cdot \sum_{n=0}^{\infty}b_{2n}z^{2n} =\sum_{n=0}^{\infty}\sum_{k=0}^{n}\frac{1+(-1)^n}{2}\frac{1+(-1)^k}{2} a_{k}b_{n-k}z^n.
	\end{equation*}
	
	Comparing the coefficients of $z^n$ after some simple manipulations we have \eqref{main4}. 
	
	Identity \eqref{main5} follows from
	$\displaystyle\cosh z = \coth z\cdot \sinh z$ which gives
	\begin{equation}\label{Formula3-main}
	\frac{1}{2}L(z)e^{-\frac{L_j}{2}z} = {\coth \Big(\frac{\sqrt5 F_j}{2}z\Big)} \sinh \Big(\frac{\sqrt5 F_j}{2}z\Big).
	\end{equation}

	Proceeding as before,
	\begin{align*}
	\text{LHS of \eqref{Formula3-main}}&=\frac{1}{2}\sum_{n=0}^{\infty}L_{jn}\frac{z^n}{n!}\cdot\sum_{n=0}^{\infty}
	\left(-\frac{L_j}{2}\right)^n\frac{z^n}{n!}\\
	&=\frac{1}{2}\sum_{n=0}^{\infty}\sum_{k=0}^{n}\binom{n}{k}L_{jk}\left(-\frac{L_j}{2}\right)^{n-k}\frac{z^n}{n!},
	\end{align*}
	and
	\begin{align*}
	\text{RHS  of \eqref{Formula3-main}}&=\sum_{n=0}^{\infty}4^n \Big(\frac{\sqrt5 F_j}{2}\Big)^{2n-1}B_{2n}\frac{z^{2n-1}}{(2n)!}  
	\cdot\sum_{n=0}^{\infty} \Big(\frac{\sqrt5 F_j}{2}\Big)^{2n+1}\frac{z^{2n+1}}{(2n+1)!}\\
	&=\sum_{n=0}^{\infty}\sum_{k=0}^{n}\binom{n}{k}\frac{1+(-1)^n}{2}\frac{1+(-1)^k}{2}2^k   
	\Big(\frac{\sqrt5 F_j}{2}\Big)^{k-1}\frac{B_k}{n-k+1} \Big(\frac{\sqrt5 F_j}{2}\Big)^{n-k+1}\frac{z^{n}}{n!}\\
	&=\sum_{n=0}^{\infty}\sum_{k=0}^{n}\binom{n}{k}\frac{1+(-1)^n}{2}2^k 
	\Big(\frac{\sqrt5 F_j}{2}\Big)^{k-1}\frac{B_k}{n-k+1} \Big(\frac{\sqrt5 F_j}{2}\Big)^{n-k+1}\frac{z^{n}}{n!}.
	\end{align*}
	
	Comparing the coefficients of $z^n$ after some simple manipulations we have \eqref{main5}.
\end{proof}

In view of the binomial theorem and the Binet formula, \eqref{main5} is equivalent to
\begin{equation*}
\big(1 + (-1)^n \big)\sum_{k=0}^n \binom{n}{k} \frac{2^k B_k}{n - k + 1} = 0\,,
\end{equation*}
so that we have
\begin{equation*}
\sum_{k=0}^n \binom{n}{k} \frac{2^k B_k}{n - k + 1} = 0\,, \qquad\mbox{$n$ even}.
\end{equation*}
\begin{theorem}\label{thm3}
	For all positive integers $n$ and $j$,
	\begin{equation}\label{main6}
	\sum_{k=0}^{\lfloor{n}/{2}\rfloor}\binom{n}{2k} (5F_j^2)^k\left(\frac{F_{j(n-2k+1)}}{n-2k+1}{B_{2k}}-\frac{F_jL_j^{n-2k}}{2^n}\right) = 0\,,
	\end{equation}
	\begin{equation}\label{main7}
	\sum_{k=0}^{\lfloor{n}/{2}\rfloor}\binom{n}{2k} \frac{(5F^2_j)^k}{2k+1} \left(\frac{4^{k+1}-1}{k+1}L_{j(n-2k)}{B_{2k+2}}
	-\frac{L_j^{n-2k}}{2^n}\right) = 0\,.
	\end{equation}
\end{theorem}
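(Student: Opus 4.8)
The plan is to mirror the derivations of Theorems~\ref{thm1} and~\ref{thm2}: each identity should arise from a single elementary hyperbolic relation, rewritten as a functional equation in which the generating functions
\[
F(z)=\tfrac{2}{\sqrt5}\,e^{L_jz/2}\sinh\!\Big(\tfrac{\sqrt5F_j}{2}z\Big),\qquad L(z)=2\,e^{L_jz/2}\cosh\!\Big(\tfrac{\sqrt5F_j}{2}z\Big)
\]
appear (these are exactly the relations already used in the proof of Theorem~\ref{thm2}), and then extracted by comparing coefficients of $z^n/n!$. The one structural novelty is that the \emph{same} analytic function will be written as two different Cauchy products, one producing a Bernoulli sum and the other a Binet-type closed form, so that the resulting coefficient expressions must agree; this is what forces the right-hand sides of \eqref{main6} and \eqref{main7} to vanish.

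For \eqref{main6} I would start from $\coth w\,\sinh w=\cosh w$ with $w=\tfrac{\sqrt5F_j}{2}z$, which after substituting $F(z)$ and $L(z)$ becomes
\[
\frac{\sqrt5F_j}{2}\coth\!\Big(\frac{\sqrt5F_j}{2}z\Big)F(z)=F_j\,e^{L_jz/2}\cosh\!\Big(\frac{\sqrt5F_j}{2}z\Big).
\]
On the left I would expand $\tfrac{\sqrt5F_j}{2}z\coth(\tfrac{\sqrt5F_j}{2}z)=\sum_{k\ge0}\tfrac{(5F_j^2)^kB_{2k}}{(2k)!}z^{2k}$ (the even part of the Bernoulli generating function) against $F(z)/z=\sum_{m\ge0}\tfrac{F_{j(m+1)}}{(m+1)!}z^m$, which is a genuine power series because $F_0=0$; this produces the $\tfrac{F_{j(n-2k+1)}}{n-2k+1}B_{2k}$ term. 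On the right I would expand $e^{L_jz/2}$ against $\cosh(\tfrac{\sqrt5F_j}{2}z)$, producing the $\tfrac{F_jL_j^{n-2k}}{2^n}$ term. Comparing the coefficients of $z^n/n!$ gives \eqref{main6}.

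For \eqref{main7} I would use the companion identity $\tanh w\,\cosh w=\sinh w$, which yields
\[
\frac{\tanh(\tfrac{\sqrt5F_j}{2}z)}{\sqrt5F_jz}\,L(z)=\frac{\sinh(\tfrac{\sqrt5F_j}{2}z)}{\tfrac{\sqrt5F_j}{2}z}\,e^{L_jz/2}\;\Big(=\frac{F(z)}{F_jz}\Big).
\]
Integrating the series for $\tfrac{d}{dz}\tanh z$ recorded in the proof of Theorem~\ref{thm1} gives $\frac{\tanh(\tfrac{\sqrt5F_j}{2}z)}{\sqrt5F_jz}=\sum_{k\ge0}\tfrac{(4^{k+1}-1)(5F_j^2)^kB_{2k+2}}{(k+1)(2k+1)(2k)!}z^{2k}$; pairing this against $L(z)$ reproduces the first term of \eqref{main7}, while pairing $\frac{\sinh(\tfrac{\sqrt5F_j}{2}z)}{\tfrac{\sqrt5F_j}{2}z}=\sum_{k\ge0}\tfrac{(5F_j^2)^k}{4^k(2k+1)!}z^{2k}$ against $e^{L_jz/2}$ reproduces the second term. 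Comparing coefficients of $z^n/n!$ gives \eqref{main7}.

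I expect the only real labor to be bookkeeping rather than a genuine obstacle, since each identity collapses to a trivial hyperbolic relation. The care is in recognizing the rescaled building blocks $w\coth w$, $\tfrac{\tanh w}{w}$, $\tfrac{\sinh w}{w}$ and $F(z)/z$ as honest power series, and in converting the factorial denominators $\tfrac{1}{(2k)!\,(n-2k+1)!}$ and $\tfrac{1}{(2k+1)!\,(n-2k)!}$ into $\binom{n}{2k}$ times the factors $\tfrac{1}{n-2k+1}$, $\tfrac{1}{2k+1}$ and $\tfrac{1}{k+1}$. The entire content lies in choosing where to park the factor $e^{L_jz/2}$ and the power of $z$, so that one grouping exposes the Bernoulli numbers and the other the closed form.
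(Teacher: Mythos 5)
Your proposal is correct and follows essentially the same route as the paper, whose proof is only a sketch invoking the generating functions \eqref{egF}--\eqref{egL}, the even-index Cauchy product formula, and the identities $\sinh z=\cosh z/\coth z$, $\cosh z=\sinh z/\tanh z$ — exactly the two hyperbolic relations you use for \eqref{main6} and \eqref{main7}, respectively. Your write-up simply supplies the coefficient bookkeeping that the paper leaves to the reader, and the details check out.
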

\begin{proof}
	Use the exponential generating functions from \eqref{egF} and \eqref{egL} in conjunction with
	\begin{equation*}
	\sum_{n=0}^{\infty}a_{n}z^{n}\cdot \sum_{n=0}^{\infty}b_{2n}z^{2n}=\sum_{n=0}^{\infty}\sum_{k=0}^{\lfloor{n}/{2}\rfloor}a_{n-2k}b_{2k}z^n
	\end{equation*}
	and
	$\displaystyle\sinh z  = \frac{\cosh z}{\coth z}$, $\displaystyle\cosh z = \frac{\sinh z}{\tanh z}$.
\end{proof}

On account of the identity
\begin{equation*}
2\sum_{k = 0}^{\left\lfloor {n/2} \right\rfloor} \binom{n}{2k} x^{n-2k} z^{2k} = (x + z)^n + (x - z)^n\,,
\end{equation*}
formula \eqref{main6} can also be written as
\begin{equation*}
\sum_{k = 0}^{\left\lfloor {n/2} \right\rfloor} \binom{n}{2k} \frac{(5F_j^2)^kF_{j(n - 2k + 1)}}{n - 2k + 1}  B_{2k} = \frac{F_j L_{nj}}{2}.
\end{equation*}

\section{Special Bernoulli polynomial identities}

The properties of the Bernoulli polynomials stated in Lemmas \ref{lem.a4pymf5} and \ref{lem.j22kj78} below are direct consequences of the functional relations \eqref{1FuncRel-B} and \eqref{2FuncRel-B}.
\begin{lemma}\label{lem.a4pymf5}
	If $n$ is any non-negative integer, then
	\begin{equation*}
	B_n (1 + x) \pm B_n (1 + y) = B_n (x) \pm B_n (y) + n(x^{n - 1}  \pm y^{n - 1}),
	\end{equation*}
	\begin{equation*}
	B_n (1 + x) \pm B_n (1 + y) = (-1)^n\big(B_n (1 - x) \pm B_n (1 - y)\big) + n(x^{n - 1}  \pm y^{n - 1}),
	\end{equation*}
	\begin{equation}
	\label{eq.qfnqxj1}
	B_n ( - x) \pm B_n ( - y) =(-1)^n\big( B_n (x) \pm B_n (y) + n(x^{n - 1}  \pm y^{n - 1} )\big).
	\end{equation}
\end{lemma}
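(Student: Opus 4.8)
The plan is to recognize that each of the three claimed identities is simply the two-variable ``symmetrization'' of a single-variable functional relation already recorded in the introduction; I would obtain each by writing the corresponding one-variable relation once with argument $x$ and once with argument $y$, and then adding or subtracting the two copies. Since $\pm$ denotes a simultaneous choice of sign throughout a given line, it suffices to carry out the two cases (all plus, all minus) in parallel, which the linear structure of the relations handles automatically.

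For the first identity I would apply \eqref{1FuncRel-B} at $x$ and at $y$, giving $B_n(1+x)=B_n(x)+nx^{n-1}$ and $B_n(1+y)=B_n(y)+ny^{n-1}$; forming the combination $B_n(1+x)\pm B_n(1+y)$ and collecting the remainder terms yields $B_n(x)\pm B_n(y)+n(x^{n-1}\pm y^{n-1})$ at once. For the second identity I would first rewrite \eqref{2FuncRel-B} in the inverted form $B_n(x)=(-1)^nB_n(1-x)$, valid because $(-1)^n$ is its own inverse, and substitute this (together with the analogous relation in $y$) into the right-hand side of the first identity; factoring out $(-1)^n$ from the two Bernoulli terms produces exactly the stated expression.

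For the third identity \eqref{eq.qfnqxj1} I would note that relation \eqref{3FuncRel-B}, namely $B_n(-x)=(-1)^n\big(B_n(x)+nx^{n-1}\big)$, is itself a direct consequence of \eqref{1FuncRel-B} and \eqref{2FuncRel-B}: replacing $x$ by $-x$ in \eqref{2FuncRel-B} gives $B_n(1+x)=(-1)^nB_n(-x)$, and eliminating $B_n(1+x)$ via \eqref{1FuncRel-B} yields \eqref{3FuncRel-B}. Applying \eqref{3FuncRel-B} at $x$ and at $y$ and taking the combination $B_n(-x)\pm B_n(-y)$ then gives the claim after pulling out the common factor $(-1)^n$.

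There is essentially no obstacle here: the whole argument is the elementary observation that a relation of the form $f(x)=g(x)+\text{(remainder)}$ may be added to its $y$-counterpart term by term. The only points requiring a moment's care are the sign bookkeeping implied by the simultaneous $\pm$ and the harmless reindexing $x\mapsto -x$ used to pass between \eqref{2FuncRel-B} and the form of \eqref{3FuncRel-B}.
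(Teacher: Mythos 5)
Your proposal is correct and follows exactly the route the paper intends: the paper offers no written proof beyond the remark that the lemma is a direct consequence of \eqref{1FuncRel-B} and \eqref{2FuncRel-B}, and your argument simply carries out that plan, including re-deriving \eqref{3FuncRel-B} from those two relations before symmetrizing in $x$ and $y$. Nothing is missing; your write-up is in fact more explicit than the paper's.
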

\begin{lemma}\label{lem.j22kj78}
	Let $n$ be any non-negative integer. If $x - y = 1$, then
	\begin{equation}\label{eq.u2rmh0q}
	B_n (x) - B_n (y) = ny^{n - 1},
	\end{equation}
	\begin{equation*}
	B_n ( - x) - B_n ( - y) = n(-1)^nx^{n - 1};
	\end{equation*}
	while if $x + y =1$, then
	\begin{equation}
	\label{eq.mtwbof7}
	B_n (x) - (-1)^n B_n (y)=0,
	\end{equation}
	\begin{displaymath}
	B_n (1 + x) - B_n (1 + y) =\begin{cases}
	n(x^{n - 1}  - y^{n - 1}), & \text{\rm if $n$ is even;} \\[6pt]
	- 2B_n (y) + n(x^{n - 1}  - y^{n - 1}), & \text{\rm otherwise.}
	\end{cases} 
	\end{displaymath}
\end{lemma}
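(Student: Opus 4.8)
The plan is to derive all four assertions directly from the two defining functional relations \eqref{1FuncRel-B} and \eqref{2FuncRel-B}, simply by substituting the linear constraint that ties $x$ and $y$ together; no series manipulation is needed, which is exactly why these facts are flagged as ``direct consequences'' in the text preceding Lemma \ref{lem.a4pymf5}. First I would treat the two identities under the hypothesis $x-y=1$. Writing $x=1+y$ turns the left-hand side of \eqref{eq.u2rmh0q} into $B_n(1+y)-B_n(y)$, which is precisely $ny^{n-1}$ by \eqref{1FuncRel-B} with $x$ replaced by $y$. For the companion statement I would instead exploit the relation $-y=1+(-x)$, which holds because $x-y=1$; applying \eqref{1FuncRel-B} with $x$ replaced by $-x$ gives $B_n(-y)-B_n(-x)=n(-x)^{n-1}=n(-1)^{n-1}x^{n-1}$, and rearranging yields $B_n(-x)-B_n(-y)=n(-1)^nx^{n-1}$.

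Next I would turn to the hypothesis $x+y=1$. Here \eqref{eq.mtwbof7} is immediate: since $x=1-y$, the reflection formula \eqref{2FuncRel-B} with $x\mapsto y$ gives $B_n(x)=B_n(1-y)=(-1)^nB_n(y)$, which is exactly the claim $B_n(x)-(-1)^nB_n(y)=0$. For the final, case-split identity I would start from the first relation of Lemma \ref{lem.a4pymf5} taken with the minus sign, namely $B_n(1+x)-B_n(1+y)=B_n(x)-B_n(y)+n(x^{n-1}-y^{n-1})$, and then substitute the value of $B_n(x)-B_n(y)$ supplied by \eqref{eq.mtwbof7}. Because $B_n(x)=(-1)^nB_n(y)$ under $x+y=1$, we have $B_n(x)-B_n(y)=\big((-1)^n-1\big)B_n(y)$; this factor vanishes when $n$ is even and equals $-2B_n(y)$ when $n$ is odd, producing exactly the two branches of the stated formula.

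The argument is entirely substitutional, so there is no genuine obstacle; the only point requiring care is the last identity, where one must split on the parity of $n$ and invoke Lemma \ref{lem.a4pymf5} rather than \eqref{1FuncRel-B} alone, since it is the translated arguments $1+x$ and $1+y$ that appear on the left. A harmless sanity check is to confirm that the two branches agree in the even case (where the $-2B_n(y)$ term is absent) and to verify consistency with the already-established relation \eqref{eq.mtwbof7} that feeds the case split.
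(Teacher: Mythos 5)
Your proof is correct and matches the paper's approach exactly: the paper gives no explicit argument, stating only that Lemma \ref{lem.j22kj78} is a direct consequence of the functional relations \eqref{1FuncRel-B} and \eqref{2FuncRel-B}, and your substitutions ($x=1+y$ for the first pair, $x=1-y$ for the second, plus the parity split via $B_n(x)-B_n(y)=((-1)^n-1)B_n(y)$) are precisely the intended verifications. No gaps; the argument is complete.
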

\begin{lemma}\label{lem.nps28bv}
	For real or complex $z$, let a given well-behaved function $h(z)$ have in its domain the representation $h(z)=\sum\limits_{k=c_1}^{c_2}{v_kz^{w_k}}$, where $v_k$ and $w_k$ are given real sequences and
	\break $-\infty\leq c_1< c_2\leq+\infty$. Let $i$ and $m$ be integers. Then
	\begin{equation}\label{F}
	\sum_{k = c_1 }^{c_2 } {F_{iw_k + m} v_kz^{w_k} }  = 
	\frac{1}{\sqrt5}\big(\alpha^m h(\alpha ^i z) + \beta^m h(\beta ^i z)\big)\,,
	\end{equation}
	\begin{equation}\label{L}
	\sum_{k = c_1 }^{c_2 } {L_{iw_k + m} v_kz^{w_k}} = \alpha^m h(\alpha ^i z) - \beta^m h(\beta ^i z)\,.
	\end{equation}
\end{lemma}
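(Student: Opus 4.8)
The plan is to prove \eqref{F} and \eqref{L} by a single substitution of the Binet formulas
\[
F_{n} = \frac{\alpha^{n} - \beta^{n}}{\sqrt5}, \qquad L_{n} = \alpha^{n} + \beta^{n},
\]
which are valid for every integer $n$ (including negative indices, since $\alpha\beta=-1$). After this substitution the Fibonacci (respectively Lucas) weight inside the sum splits into an $\alpha$-part and a $\beta$-part, and the whole series factors into two scaled copies of $h$; nothing beyond bookkeeping remains.

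First I would record the elementary exponent identity $\alpha^{iw_k}z^{w_k} = (\alpha^i z)^{w_k}$ (and the same with $\beta$ in place of $\alpha$), which lets me re-index each of the two partial sums as a value of $h$:
\[
\sum_{k=c_1}^{c_2} \alpha^{iw_k}\, v_k z^{w_k} = \sum_{k=c_1}^{c_2} v_k (\alpha^i z)^{w_k} = h(\alpha^i z), \qquad \sum_{k=c_1}^{c_2} \beta^{iw_k}\, v_k z^{w_k} = h(\beta^i z).
\]
Writing $F_{iw_k+m} = (\alpha^{m}\alpha^{iw_k} - \beta^{m}\beta^{iw_k})/\sqrt5$ and $L_{iw_k+m} = \alpha^{m}\alpha^{iw_k} + \beta^{m}\beta^{iw_k}$, multiplying by $v_k z^{w_k}$, summing over $k$, and pulling the constants $\alpha^m$, $\beta^m$ out of the two partial sums, I collect the $\alpha$- and $\beta$-contributions and read off \eqref{F} and \eqref{L} directly. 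The two identities are thus the same computation, differing only in the sign inherited from the respective Binet formula.

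The only genuine point to check --- and the reason the hypotheses require $h$ to be ``well-behaved'' with the representation $h(z)=\sum_{k} v_k z^{w_k}$ valid throughout its domain --- is the legitimacy of the rearrangement when $c_2 = +\infty$. Splitting one series into two and recognizing each as a value of $h$ presupposes that both scaled series $\sum_k v_k(\alpha^i z)^{w_k}$ and $\sum_k v_k(\beta^i z)^{w_k}$ converge, that is, that the points $\alpha^i z$ and $\beta^i z$ lie in the region where the representation holds. I expect this convergence question, together with the interpretation of $\beta^{iw_k}$ and $h(\beta^i z)$ on the appropriate branch when $\beta<0$ and the $w_k$ are not integers, to be the main (and only) obstacle; I would dispose of it by assuming absolute convergence on the relevant domain, whereupon the term-by-term manipulation is justified and the result is immediate (the argument being purely formal in the frequent case where the $w_k$ are non-negative integers and $h$ is a formal power series).
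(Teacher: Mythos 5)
Your approach---substituting the Binet forms $F_{iw_k+m}=(\alpha^{m}\alpha^{iw_k}-\beta^{m}\beta^{iw_k})/\sqrt5$ and $L_{iw_k+m}=\alpha^{m}\alpha^{iw_k}+\beta^{m}\beta^{iw_k}$, using $\alpha^{iw_k}z^{w_k}=(\alpha^{i}z)^{w_k}$, and reassembling the two partial sums as $h(\alpha^{i}z)$ and $h(\beta^{i}z)$---is exactly the right and standard argument; the paper itself offers no proof of this lemma, merely citing \cite[Theorem~1]{adegoke20x}, and your convergence caveat for the case $c_2=+\infty$ is a sensible addition.

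However, your final step does not deliver what you claim. Carrying your substitution through gives
\begin{equation*}
\sum_{k=c_1}^{c_2} F_{iw_k+m}\, v_k z^{w_k} \;=\; \frac{1}{\sqrt5}\big(\alpha^{m} h(\alpha^{i} z) - \beta^{m} h(\beta^{i} z)\big),
\qquad
\sum_{k=c_1}^{c_2} L_{iw_k+m}\, v_k z^{w_k} \;=\; \alpha^{m} h(\alpha^{i} z) + \beta^{m} h(\beta^{i} z),
\end{equation*}
that is, the identities \eqref{F} and \eqref{L} with the two signs \emph{interchanged}. You cannot ``read off'' the statement as printed, because the statement as printed is false: take $h(z)=z$, $i=1$, $m=0$; then the left side of \eqref{F} is $F_1 z = z$, while its right side is $\frac{1}{\sqrt5}(\alpha+\beta)z = \frac{z}{\sqrt5}$. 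The signs in \eqref{F} and \eqref{L} are a typo in the paper---the versions your computation actually produces are the ones used in the sequel (see \eqref{Th4-1}, \eqref{Th4-2}, \eqref{eq.xludrew1}, \eqref{eq.aneantm}, where the Fibonacci sums carry the minus sign and the Lucas sums the plus sign). So your argument is a correct proof of the corrected lemma, but the assertion that it yields \eqref{F} and \eqref{L} ``directly'' is a sign error: a careful write-up would either flag the discrepancy and prove the corrected statement, or (incorrectly) force the stated signs and thereby break the verification. As it stands, the claimed conclusion and the computation you describe contradict each other.
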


Lemma \ref{lem.nps28bv} written in a slightly different form we can find in \cite[Theorem  1]{adegoke20x}. 
\begin{theorem}
	Let $j$ and $m$ be integers and $n$ a non-negative integer. Then
	\begin{equation}\label{Th4-1}
	\sum_{k = 0}^n {\binom nkF_{jk + m} B_{n - k} (x)z^k }  =\frac{1}{\sqrt5}\big( 
	\alpha^m B_n(x + \alpha ^j z) -\beta^m B_n (x + \beta ^j z)\big)\,,
	\end{equation}
	\begin{equation}\label{Th4-2}
	\sum_{k = 0}^n {\binom nkL_{jk + m} B_{n - k} (x)z^k }  = 
	\alpha^m B_n(x + \alpha ^j z) + \beta^m B_n (x + \beta ^j z).
	\end{equation}
\end{theorem}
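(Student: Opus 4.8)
The plan is to recognize both sums as instances of the general Fibonacci/Lucas transform recorded in Lemma~\ref{lem.nps28bv}, so that the whole argument reduces to choosing the right auxiliary function $h$ and reading off the output. The key observation is that the factor $\binom{n}{k}B_{n-k}(x)$ appearing in \eqref{Th4-1} and \eqref{Th4-2} is exactly the coefficient of $t^k$ in the binomial expansion of a shifted Bernoulli polynomial: by the addition formula stated in the Introduction,
\begin{equation*}
h(t) := B_n(x + t) = \sum_{k=0}^{n} \binom{n}{k} B_{n-k}(x)\, t^k.
\end{equation*}
Thus, in the notation of Lemma~\ref{lem.nps28bv}, I would take $c_1 = 0$, $c_2 = n$, $w_k = k$, and $v_k = \binom{n}{k}B_{n-k}(x)$, with the integer $i$ of the lemma set equal to $j$. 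Since $h$ is a polynomial in $t$ (indeed an entire function), the representation $h(t) = \sum_{k=0}^{n} v_k t^{w_k}$ holds for every complex $t$, so the hypotheses of the lemma are met with no convergence restriction.

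With these choices the left-hand side of \eqref{F} becomes precisely $\sum_{k=0}^{n}\binom{n}{k} F_{jk+m} B_{n-k}(x)\,z^k$, while the right-hand side is $\frac{1}{\sqrt5}\big(\alpha^m h(\alpha^j z) - \beta^m h(\beta^j z)\big)$. Substituting $h(\alpha^j z) = B_n(x + \alpha^j z)$ and $h(\beta^j z) = B_n(x + \beta^j z)$ yields \eqref{Th4-1} immediately. Applying \eqref{L} with the same data produces the left-hand side of \eqref{Th4-2} on the one side and $\alpha^m h(\alpha^j z) + \beta^m h(\beta^j z)$ on the other, giving \eqref{Th4-2}. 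The hypotheses of the theorem ($j,m\in\mathbb{Z}$, $n\geq 0$) supply exactly the integrality of $i=j$ and $m$ demanded by the lemma.

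There is essentially no analytic obstacle here; the entire content is the structural recognition that the summand matches the template of Lemma~\ref{lem.nps28bv}. The one point that warrants care is the placement of the signs, which must be dictated by the Binet formulas $F_r = (\alpha^r - \beta^r)/\sqrt5$ and $L_r = \alpha^r + \beta^r$: writing $F_{jk+m} = \big(\alpha^m(\alpha^j)^k - \beta^m(\beta^j)^k\big)/\sqrt5$ shows that the Fibonacci sum must carry the difference $\alpha^m h(\alpha^j z) - \beta^m h(\beta^j z)$, whereas $L_{jk+m} = \alpha^m(\alpha^j)^k + \beta^m(\beta^j)^k$ forces the sum in \eqref{Th4-2}. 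If one prefers not to invoke the lemma as a black box, these same two lines — expand $F_{jk+m}$ (resp.\ $L_{jk+m}$) by Binet, factor out $\alpha^m$ and $\beta^m$, and recognize each inner sum as $h$ evaluated at $\alpha^j z$ or $\beta^j z$ — already constitute a complete self-contained proof.
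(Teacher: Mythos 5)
Your proposal is correct and follows exactly the paper's own argument: apply Lemma~\ref{lem.nps28bv} with $h(z)=B_n(x+z)=\sum_{k=0}^n\binom nk B_{n-k}(x)z^k$, i.e., $w_k=k$, $v_k=\binom nk B_{n-k}(x)$, $c_1=0$, $c_2=n$, and $i=j$, then read off \eqref{Th4-1} from \eqref{F} and \eqref{Th4-2} from \eqref{L}. Your closing remark that one could bypass the lemma by expanding $F_{jk+m}$ and $L_{jk+m}$ via Binet is a nice self-contained alternative, but it is just the lemma's own proof unrolled, so the substance is identical.
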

\begin{proof} 
	Use \eqref{F} and \eqref{L} 
	with 
	$$
	h(z)=B_n (x + z) = \sum\limits_{k = 0}^n {\binom nkB_{n - k} (x)z^{k}},
	$$
so that $w_k=k$, $v_k=\binom nk B_{n - k} (x)$, $c_1=0$ and $c_2=n$.
\end{proof}

Setting $x=0$ in \eqref{Th4-1} and \eqref{Th4-2} yield the following Fibonacci--Bernoulli and Lucas--Bernoulli relations.
\begin{corollary}\label{cor.wxu7h9l}
	Let $j$ and $m$ be integers and $n$ non-negative integer. Then
	\begin{equation}\label{eq.xludrew1}
	\sum_{k = 0}^n {\binom nk F_{jk + m} B_{n - k} z^k }  =\frac{1}{\sqrt5}\big( 
	\alpha^m B_n(\alpha ^j z) - \beta^m B_n (\beta ^j z)\big)\,,
	\end{equation}
	\begin{equation}\label{eq.aneantm}
	\sum_{k = 0}^n {\binom nkL_{jk + m} B_{n - k} z^k }  = 
	\alpha^m B_n(\alpha ^j z) + \beta^m B_n (\beta ^j z).
	\end{equation}
\end{corollary}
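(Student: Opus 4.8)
The plan is to obtain both identities as immediate specializations of the preceding theorem by setting $x=0$. Recall that the Bernoulli numbers are the constant terms of the Bernoulli polynomials, $B_n = B_n(0)$, as recorded just after \eqref{eq.qr64rlf}. Hence evaluating the polynomial identities \eqref{Th4-1} and \eqref{Th4-2} at $x=0$ should turn every occurrence of $B_{n-k}(x)$ on the left into the Bernoulli number $B_{n-k}$, and every occurrence of $B_n(x+\alpha^j z)$ and $B_n(x+\beta^j z)$ on the right into $B_n(\alpha^j z)$ and $B_n(\beta^j z)$, respectively.

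First I would write down \eqref{Th4-1} verbatim and substitute $x=0$. The left-hand side becomes $\sum_{k=0}^n \binom nk F_{jk+m} B_{n-k} z^k$, which is exactly the left-hand side of \eqref{eq.xludrew1}, while the right-hand side becomes $\frac{1}{\sqrt5}\bigl(\alpha^m B_n(\alpha^j z) - \beta^m B_n(\beta^j z)\bigr)$, matching the right-hand side of \eqref{eq.xludrew1}; this yields the Fibonacci--Bernoulli relation. Next I would repeat the identical substitution in \eqref{Th4-2}, where the sign joining the two $B_n$-terms is a plus and the prefactor $1/\sqrt5$ is absent, producing the Lucas--Bernoulli relation \eqref{eq.aneantm} in one line.

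There is essentially no obstacle here, since the corollary is a pure evaluation rather than a new computation. The only point worth checking is that the substitution $x=0$ is legitimate, which it is: the derivation of \eqref{Th4-1} and \eqref{Th4-2} rests on Lemma \ref{lem.nps28bv} applied to $h(z)=B_n(x+z)$, and that argument places no restriction on the parameter $x$, so the identities hold for every $x\in\mathbb{C}$ and in particular at $x=0$. Thus no new estimate, generating-function manipulation, or combinatorial identity is required, and the corollary follows by direct specialization.
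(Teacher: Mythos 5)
Your proposal is correct and is exactly the paper's argument: the paper introduces this corollary with the single remark that setting $x=0$ in \eqref{Th4-1} and \eqref{Th4-2} yields it, relying on $B_{n-k}(0)=B_{n-k}$ just as you do. Your additional check that the substitution $x=0$ is legitimate is sound but not needed beyond what the paper already assumes.
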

\begin{theorem}
	Let $j$ and $m$ be integers and $n$  non-negative integer. Then
	\begin{equation}\label{eq.jik918k}
	\sum_{k = 0}^n {\binom nk \frac{F_{jk + m}}{L_j^{k}} B_{n - k} } = 
	\begin{cases}
	F_m B_n \Big(\frac{\alpha ^j }{L_j}\Big), & \text{\rm if $n$ is even;} \\[6pt]
	\frac{{L_m }}{{\sqrt 5 }}B_n\Big(\frac{\alpha ^j }{L_j}\Big), & \text{\rm  if $n$ is odd,}
	\end{cases}
	\end{equation}
	\begin{equation*}
	\sum_{k = 0}^n \binom nk\frac{L_{jk + m}}{L_j^{k}}B_{n - k} = 
	\begin{cases}
	{L_m} B_n \Big(\frac{\alpha ^j }{L_j}\Big), & \text{\rm  if $n$ is even;} \\[6pt]
	\sqrt5 F_m B_n \!\left(\frac{\alpha ^j }{L_j}\right), & \text{\rm  if $n$ is odd.}
	\end{cases}
	\end{equation*}
\end{theorem}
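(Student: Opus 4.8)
The plan is to specialize the identities of Corollary~\ref{cor.wxu7h9l} at the single value $z = 1/L_j$. With this choice the two Bernoulli arguments on the right-hand sides become $\alpha^j z = \alpha^j/L_j$ and $\beta^j z = \beta^j/L_j$, and the decisive observation is that these sum to $1$: since $L_j = \alpha^j + \beta^j$, we have
\begin{equation*}
\frac{\alpha^j}{L_j} + \frac{\beta^j}{L_j} = \frac{\alpha^j + \beta^j}{L_j} = 1.
\end{equation*}
Hence $\beta^j/L_j = 1 - \alpha^j/L_j$, and the reflection formula \eqref{2FuncRel-B} (equivalently \eqref{eq.mtwbof7} applied with $x = \alpha^j/L_j$ and $y = \beta^j/L_j$) yields $B_n(\beta^j/L_j) = (-1)^n B_n(\alpha^j/L_j)$.

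Substituting $z = 1/L_j$ into \eqref{eq.xludrew1}, the left-hand side is exactly $\sum_{k=0}^n \binom{n}{k} L_j^{-k} F_{jk+m} B_{n-k}$, the sum I want. On the right-hand side I would use the relation above to extract the common factor $B_n(\alpha^j/L_j)$, leaving the scalar $\tfrac{1}{\sqrt5}\big(\alpha^m - (-1)^n\beta^m\big)$. For even $n$ this scalar is $\tfrac{1}{\sqrt5}(\alpha^m - \beta^m) = F_m$ by the Binet formula, and for odd $n$ it is $\tfrac{1}{\sqrt5}(\alpha^m + \beta^m) = \tfrac{L_m}{\sqrt5}$, which reproduces the two cases of \eqref{eq.jik918k}. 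The Lucas identity is handled identically, starting from \eqref{eq.aneantm}: the common factor is again $B_n(\alpha^j/L_j)$, and the residual scalar $\alpha^m + (-1)^n\beta^m$ equals $L_m$ for even $n$ and $\alpha^m - \beta^m = \sqrt5\,F_m$ for odd $n$.

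I expect no serious obstacle; the entire content is the parity observation that $\alpha^j/L_j$ and $\beta^j/L_j$ are reflections of one another about $1/2$, so that the reflection formula collapses the two Bernoulli-polynomial terms in Corollary~\ref{cor.wxu7h9l} into a single one. Once that is in place, the case split is forced purely by which Binet combination, $\alpha^m \mp \beta^m$, survives after the sign $(-1)^n$ is absorbed, and the four stated cases follow by inspection.
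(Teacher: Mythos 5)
Your proposal is correct and is essentially the paper's own proof: the authors likewise set $x=\frac{\alpha^j}{L_j}$ in the reflection formula \eqref{2FuncRel-B} to get $B_n\big(\frac{\beta^j}{L_j}\big)=(-1)^n B_n\big(\frac{\alpha^j}{L_j}\big)$ (using $L_j=\alpha^j+\beta^j$) and then substitute $z=\frac{1}{L_j}$ into Corollary~\ref{cor.wxu7h9l}. Your write-up just makes explicit the Binet-formula evaluations of $\frac{1}{\sqrt5}\big(\alpha^m \mp (-1)^n\beta^m\big)$ that the paper leaves to the reader.
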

\begin{proof} Choose $x=\frac{\alpha^j}{L_j}$ in \eqref{2FuncRel-B} and use the Binet formula $L_j=\alpha^j+\beta^j$ to obtain
	\begin{equation}\label{alpha-beta}
	B_n \Big(\frac{\beta^j }{L_j}\Big) = (-1)^n B_n \Big(\frac{\alpha ^j }{L_j}\Big).
	\end{equation}
	
	Now use this information in Corollary~\ref{cor.wxu7h9l} with $z=\frac{1}{L_j}$.
\end{proof}
\begin{lemma}\label{lem.n4r4nrv}
	Let $a$, $b$, $c$ and $d$ be rational numbers and $\lambda$ an irrational number. Then $a + \lambda b = c + \lambda d$ if and only if $a = c$ and $b = d$.
\end{lemma}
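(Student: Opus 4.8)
The statement is essentially the assertion that $1$ and $\lambda$ are linearly independent over $\mathbb{Q}$, so the plan is short. The reverse implication is immediate: if $a=c$ and $b=d$, then substituting gives $a+\lambda b = c+\lambda d$ at once. All the content lies in the forward implication, which I would handle by isolating $\lambda$ and playing rationality against irrationality.

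First I would rearrange the hypothesis $a+\lambda b = c+\lambda d$ into the form
\begin{equation*}
a - c = \lambda(d - b).
\end{equation*}
The left-hand side $a-c$ is rational, being a difference of rationals, and $d-b$ is rational for the same reason. The decisive step is a case split on whether $d-b$ vanishes. If $d-b\neq 0$, then dividing yields $\lambda = (a-c)/(d-b)$, which exhibits $\lambda$ as a quotient of two rationals and therefore as a rational number, contradicting the assumed irrationality of $\lambda$. Hence $d-b=0$, i.e., $b=d$. Feeding this back into $a-c=\lambda(d-b)=0$ forces $a=c$, which completes the forward direction.

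There is no genuine obstacle in this argument; the only point needing a moment's care is that the division by $d-b$ is valid exactly when $d-b\neq 0$, and it is precisely that case which the contradiction eliminates. Once $d-b=0$ is established, the conclusion $a=c$ follows with no additional computation, so the two halves of the biconditional are both settled.
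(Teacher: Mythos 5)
Your proof is correct: the rearrangement $a-c=\lambda(d-b)$, the case split on $d-b$, and the contradiction with the irrationality of $\lambda$ constitute the standard argument that $1$ and $\lambda$ are linearly independent over $\mathbb{Q}$. The paper itself states this lemma without proof, treating it as a known elementary fact, and your argument is exactly the one the authors implicitly rely on, so there is nothing to reconcile.
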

\begin{corollary} Let $j$ be an integer and $n$ a non-negative integer. Then
	\begin{equation}\label{eq.mdgtnrw}
	\sum_{k = 0}^n \binom nk \frac{F_{jk - 1}}{L_j^k} B_{n-k} = B_n\Big(\frac{\alpha ^j }{L_j}\Big),\qquad\mbox{$n$ even},
	\end{equation}
	\begin{equation}\label{eq.bemj6wo}
	\sum_{k = 1}^n \binom nk \frac{F_{jk}}{L_j^k} B_{n-k} = 0,\qquad\mbox{$n$ even},
	\end{equation}
	\begin{equation}\label{eq.qzniemk}
	\sum_{k = 0}^n \binom nk  \frac{L_{jk-1}}{L_j^k} B_{n-k} =\sqrt5\, B_n\Big(\frac{\alpha ^j }{L_j}\Big),\qquad\mbox{$n$ odd},
	\end{equation}
	\begin{equation} \label{eq.i3xsg8e}
	\sum_{k = 0}^n \binom nk \frac{L_{jk}}{L_j^k} B_{n-k} = 0,\qquad\mbox{$n$ odd}.
	\end{equation}
\end{corollary}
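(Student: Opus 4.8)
The plan is to read off all four identities as specializations of the Theorem established immediately above, namely the Fibonacci identity \eqref{eq.jik918k} and its Lucas companion, evaluated at the two smallest shifts $m=-1$ and $m=0$. The only arithmetic input required is $F_{-1}=1$ and $F_0=0$, obtained by running the defining recurrence backwards, $F_{-1}=F_1-F_0$.

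First I would treat the Fibonacci-based relations. Setting $m=-1$ in \eqref{eq.jik918k} and taking $n$ even, the right-hand side becomes $F_{-1}B_n(\alpha^j/L_j)=B_n(\alpha^j/L_j)$, which is exactly \eqref{eq.mdgtnrw}. Setting instead $m=0$ with $n$ even gives $F_0 B_n(\alpha^j/L_j)=0$ on the right; since the $k=0$ summand is $\binom{n}{0}F_0 B_n=0$, it may be dropped and the sum started at $k=1$, yielding \eqref{eq.bemj6wo}. The two Lucas-based relations come from the companion Lucas identity of the Theorem in the odd-$n$ case: at $m=-1$ its right-hand side is $\sqrt5\,F_{-1}B_n(\alpha^j/L_j)=\sqrt5\,B_n(\alpha^j/L_j)$, giving \eqref{eq.qzniemk}, while at $m=0$ it is $\sqrt5\,F_0 B_n(\alpha^j/L_j)=0$, giving \eqref{eq.i3xsg8e}; here the $k=0$ term $L_0 B_n=2B_n$ does not vanish, which is why that sum is retained from $k=0$.

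For a self-contained derivation that makes the role of Lemma \ref{lem.n4r4nrv} transparent, I would instead start from Corollary \ref{cor.wxu7h9l} at $z=1/L_j$. Writing $\alpha^j/L_j=\tfrac12+\tfrac{F_j}{2L_j}\sqrt5$ and its conjugate $\beta^j/L_j=\tfrac12-\tfrac{F_j}{2L_j}\sqrt5$, I would expand $B_n(\alpha^j/L_j)=A+C\sqrt5$ with $A,C$ rational; since $B_n$ has rational coefficients, $B_n(\beta^j/L_j)=A-C\sqrt5$. The symmetry \eqref{alpha-beta}, $B_n(\beta^j/L_j)=(-1)^nB_n(\alpha^j/L_j)$, then reads $A-C\sqrt5=(-1)^n(A+C\sqrt5)$, and Lemma \ref{lem.n4r4nrv} splits this into its rational and irrational parts, forcing $C=0$ when $n$ is even and $A=0$ when $n$ is odd. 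Substituting the resulting purely rational (resp. purely irrational) value of $B_n(\alpha^j/L_j)$ into \eqref{eq.xludrew1} and \eqref{eq.aneantm}, expanded via $\alpha^m=\tfrac12(L_m+\sqrt5 F_m)$, collapses the right-hand sides to the clean coefficients $F_m,L_m$ (even $n$) and $L_m/\sqrt5,\sqrt5 F_m$ (odd $n$); the four cases $m\in\{-1,0\}$ then reproduce \eqref{eq.mdgtnrw}--\eqref{eq.i3xsg8e}.

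I do not expect a genuine obstacle: the result is a repackaging of the Theorem, and every step is routine. The only points demanding care are the correct backward values $F_{-1}=1$ and $F_0=0$ (not their Lucas analogues), the choice of lower summation index according to whether the $k=0$ term vanishes, and---in the self-contained route---the clean application of Lemma \ref{lem.n4r4nrv} to separate rational and $\sqrt5$-parts, which is precisely the mechanism that pins down the stated parity restrictions.
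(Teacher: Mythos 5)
Your proof is correct, but it takes a genuinely different route from the paper's. You read all four identities off as specializations of the preceding theorem at $m=-1$ and $m=0$ (using $F_{-1}=1$, $F_0=0$, and the fact that the theorem is stated for arbitrary integer $m$); this is legitimate and non-circular, since that theorem is proved from Corollary \ref{cor.wxu7h9l} and \eqref{alpha-beta} independently of this corollary. The paper does something else: it uses \eqref{eq.jik918k} only to conclude that $B_n\big(\frac{\alpha^j}{L_j}\big)$ is \emph{rational} for even $n$, then expands $B_n\big(\frac{\alpha^j}{L_j}\big)=\sum_{k=0}^n\binom nk B_k\,\alpha^{j(n-k)}/L_j^{n-k}$ via \eqref{eq.qr64rlf} and the identity $\alpha^s=\alpha F_s+F_{s-1}$, writing the result as $\alpha\cdot(\text{rational sum})+(\text{rational sum})$, and invokes Lemma \ref{lem.n4r4nrv} with $\lambda=\alpha$: the coefficient of $\alpha$ must vanish, which is \eqref{eq.bemj6wo}, and the remaining rational part must equal $B_n\big(\frac{\alpha^j}{L_j}\big)$, which is \eqref{eq.mdgtnrw}; the odd-$n$ Lucas pair is treated similarly with $\sqrt5\,\alpha^s=\alpha L_s+L_{s-1}$. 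So the paper's mechanism is the rational/irrational splitting---which is precisely why Lemma \ref{lem.n4r4nrv} is placed immediately before the corollary---and it extracts both identities of each pair simultaneously from one expansion, whereas your primary route bypasses Lemma \ref{lem.n4r4nrv} entirely and is shorter. Your secondary, self-contained derivation (conjugation in $\mathbb{Q}[\sqrt5\,]$ combined with \eqref{alpha-beta}) is closer in spirit to the paper, but it amounts to re-proving the theorem, so it is fine yet redundant once the theorem is available. Both of your routes also get the bookkeeping right: the $k=0$ term $F_0B_n=0$ justifies starting \eqref{eq.bemj6wo} at $k=1$, while $L_0B_n=2B_n$ forces \eqref{eq.i3xsg8e} to retain $k=0$ (which matters at $n=1$).
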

\begin{proof} Since the expression on the left side of \eqref{eq.jik918k} is rational, being the finite sum of rational numbers, it follows that $B_n\big(\frac{\alpha ^j }{L_j}\big)$ is a rational number for even $n$. Now, using \eqref{eq.qr64rlf} and relation $\alpha^s=\alpha F_s+F_{s-1}$, we have
	\begin{align*}
	B_n\Big(\frac{\alpha ^j}{L_j}\Big)& = \sum_{k = 0}^n {\binom nk\frac{{B_k\alpha ^{j(n - k)} }}{{L_j^{n - k} }}}\\
	&= \alpha \sum_{k = 0}^n \binom nk\frac{B_k F_{j(n - k)}}{L_j^{n - k}}  + \sum_{k = 0}^n \binom nk\frac{B_k F_{j(n-k) - 1}}{L_j^{n - k}},
	\end{align*}
	from which identities \eqref{eq.mdgtnrw} and \eqref{eq.bemj6wo} follow when we invoke Lemma \ref{lem.n4r4nrv}. The proof of \eqref{eq.qzniemk} and \eqref{eq.i3xsg8e} is similar. 
\end{proof}
\begin{remark}\label{remark}
	We observe from identity \eqref{eq.jik918k} that $\sqrt5 B_n \big(\frac{\alpha ^j }{L_j}\big)$ is rational for $n$ odd.
\end{remark}
\begin{theorem}
	Let $j$ and $m$ be integers and $n$ non-negative integer. Then
	\begin{equation*}
	\sum_{k = 0}^n {( - 1)^k \binom nk \frac{F_{jk + m}}{L_j^{k}} B_{n - k}} = \begin{cases}
	F_m B_n \big(\frac{\alpha ^j }{L_j }\big) + \frac{nF_{j(n - 1) + m}}{L_j^{n-1}}, & \text{\rm if $n$ is even;} \\[6pt]
	- \frac{{L_m }}{{\sqrt 5 }}B_n \big(\frac{\alpha ^j }	{L_j }\big) - \frac{nF_{j(n - 1) + m}}{L_j^{n-1}}, & \text{\rm otherwise.}
	\end{cases}
	\end{equation*}
	\begin{equation*}
	\sum_{k = 0}^n {( - 1)^k \binom nk \frac{L_{jk + m}}{L_j^{k}} B_{n - k}} =
	\begin{cases}
	L_m B_n \big(\frac{\alpha ^j }{L_j }\big) + \frac{nF_{j(n - 1) + m}}{L_j^{n-1}}, & \text{\rm if $n$ is even;} \\[6pt]
	- \sqrt 5 {F_m }B_n \big(\frac{\alpha ^j }{L_j }\big) - \frac{nL_{j(n - 1) + m}}{L_j^{n-1}}, & \text{\rm otherwise,}
	\end{cases}
	\end{equation*}
\end{theorem}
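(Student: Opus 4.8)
The plan is to re-use Corollary~\ref{cor.wxu7h9l}, but evaluated at $z=-1/L_j$ instead of the value $z=1/L_j$ that produced equation~\eqref{eq.jik918k}; the sign change is exactly what manufactures the alternating factor $(-1)^k$ on the left. Substituting $z=-1/L_j$ into \eqref{eq.xludrew1} and \eqref{eq.aneantm} turns their left-hand sides into $\sum_{k=0}^n(-1)^k\binom nk L_j^{-k}F_{jk+m}B_{n-k}$ and $\sum_{k=0}^n(-1)^k\binom nk L_j^{-k}L_{jk+m}B_{n-k}$, which are the sums in question, while their right-hand sides become combinations of $B_n(-\alpha^j/L_j)$ and $B_n(-\beta^j/L_j)$. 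Thus the whole problem reduces to simplifying these two negative-argument evaluations.

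First I would apply the reflection formula \eqref{3FuncRel-B}, namely $B_n(-x)=(-1)^n(B_n(x)+nx^{n-1})$, at $x=\alpha^j/L_j$ and $x=\beta^j/L_j$. This introduces a global factor $(-1)^n$ together with a correction term $n\,\alpha^{j(n-1)}/L_j^{n-1}$ (resp.\ $n\,\beta^{j(n-1)}/L_j^{n-1}$). Next I would collapse the two Bernoulli evaluations into one by the relation \eqref{alpha-beta}, $B_n(\beta^j/L_j)=(-1)^nB_n(\alpha^j/L_j)$, already proved in the preceding argument; this leaves the single value $B_n(\alpha^j/L_j)$ multiplied by the prefactor $\alpha^m\mp(-1)^n\beta^m$. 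The exponential corrections are then handled by the Binet formulas: $\alpha^{m+j(n-1)}-\beta^{m+j(n-1)}=\sqrt5\,F_{j(n-1)+m}$ for the Fibonacci sum and $\alpha^{m+j(n-1)}+\beta^{m+j(n-1)}=L_{j(n-1)+m}$ for the Lucas sum.

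Finally I would split on the parity of $n$. When $n$ is even the prefactor $\alpha^m-\beta^m=\sqrt5\,F_m$ (resp.\ $\alpha^m+\beta^m=L_m$) survives, whereas when $n$ is odd the surviving prefactor is $L_m$ (resp.\ $\sqrt5\,F_m$), and the overall $(-1)^n$ flips the sign of every term; assembling the pieces for each parity produces the four displayed closed forms. The only delicate point is the bookkeeping of signs: the reflection formula, the relation \eqref{alpha-beta}, and the parity split each contribute a factor $(-1)^n$, and one must track carefully how the prefactor $\alpha^m\mp(-1)^n\beta^m$ resolves into either $\sqrt5\,F_m$ or $L_m$, and likewise whether the correction term reads as $\sqrt5\,F_{j(n-1)+m}$ or $L_{j(n-1)+m}$. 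No step is computationally heavy; the argument is a single substitution followed by two applications of Binet's formula.
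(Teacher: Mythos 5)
Your proposal is correct and is essentially the paper's own proof: both evaluate Corollary~\ref{cor.wxu7h9l} at $z=-1/L_j$ and reduce $B_n(-\alpha^j/L_j)$ and $B_n(-\beta^j/L_j)$ to $B_n(\alpha^j/L_j)$ via the reflection formula \eqref{3FuncRel-B} and the relation \eqref{alpha-beta}. The only organizational difference is that you keep the weights $\alpha^m,\beta^m$ attached and invoke Binet's formula at the end, whereas the paper first forms the unweighted sum and difference of the two negative-argument values and then recombines them with the product identities $F_rL_s+F_sL_r=2F_{r+s}$ and $L_rL_s+5F_rF_s=2L_{r+s}$; the mathematical content is identical, and your variant even spares you those two product identities.

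One substantive remark: carried to completion, your computation yields the correction term $nL_{j(n-1)+m}/L_j^{n-1}$ in \emph{both} parity cases of the Lucas identity, whereas the theorem as printed shows $nF_{j(n-1)+m}/L_j^{n-1}$ when $n$ is even. The printed even case is a typo, not an error on your side; the paper's own route gives $L_{j(n-1)+m}$ there as well, via $L_mL_{j(n-1)}+5F_mF_{j(n-1)}=2L_{j(n-1)+m}$. A quick check with $n=2$, $j=2$, $m=0$ confirms this: the left-hand side equals $\frac{19}{9}$, which agrees with $L_0B_2\big(\frac{\alpha^2}{3}\big)+\frac{2L_2}{3}=\frac19+2$ but not with $L_0B_2\big(\frac{\alpha^2}{3}\big)+\frac{2F_2}{3}=\frac79$. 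So do not be alarmed when your (correct) bookkeeping disagrees with the displayed statement in that one case.
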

\begin{proof} Identities  \eqref{3FuncRel-B} and \eqref{alpha-beta} 
	give
	\begin{equation*}
	B_n \Big(\!-\frac{\alpha ^j }{L_j }\Big) + B_n \Big(\!-\frac{ \beta ^j }{L_j}\Big) = \begin{cases}
	2B_n \Big(\frac{\alpha ^j }{L_j }\Big) + \frac{nL_{j(n - 1)}}{L_j^{n - 1}}, & \text{\rm if $n$ is even;} \\[6pt]
	- \frac{nL_{j(n - 1)}}{L_j^{n - 1}}, & \text{otherwise,}
	\end{cases}
	\end{equation*}
	\begin{equation*}
	B_n \Big(\!-\frac{\alpha ^j }{L_j }\Big) - B_n \Big(\! -\frac{ \beta^j }{L_j}\Big) = 
	\begin{cases}
	\frac{\sqrt 5nF_{j(n - 1)}}{L_j^{n - 1}}, & \text{if $n$ is even;} \\[6pt]
	- 2B_n \Big(\frac{\alpha ^j }{L_j }\Big) - \frac{ \sqrt 5nF_{j(n - 1)}}{L_j^{n - 1}}, & \text{\rm otherwise.}
	\end{cases}
	\end{equation*}
	
	Use these in Corollary~\ref{cor.wxu7h9l} with $z=-\frac{1}{L_j}$. Note the use of the known identities \cite[Vol.~1, p.~111]{Koshy}   
	\begin{equation*}
	F_r L_s  + F_s L_r  = 2F_{r + s},\qquad L_r L_s  + 5F_s F_r  = 2L_{r + s}.
	\end{equation*}  
\end{proof}
\begin{theorem}\label{thm.hjxt955}
	Let $j$ be integer and $n$ non-negative integer. Then
	\begin{equation}\label{eq.v7yjuc8}
	\sum_{k= 0}^n \binom nk   \frac{2^kF_{jk}}{L_j^{k}} B_{n - k} = \frac{n}{\sqrt5}\Big(\frac{\sqrt5 F_j}{L_j}\Big)^{n - 1},\qquad\mbox{$n$ even},
	\end{equation}
	\begin{equation}\label{eq.pxmrr6j}
	\sum_{k = 0}^n \binom nk   \frac{2^kL_{jk}}{L_j^{k}} B_{n - k} = n\Big(\frac{\sqrt5 F_j}{L_j}\Big)^{n - 1},\qquad\mbox{$n$ odd}.
	\end{equation}
\end{theorem}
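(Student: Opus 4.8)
The plan is to specialize Corollary~\ref{cor.wxu7h9l} at $m=0$ and $z=2/L_j$, since then the factor $z^k$ reproduces exactly the ratio $2^k/L_j^k$ appearing on the left-hand sides of both \eqref{eq.v7yjuc8} and \eqref{eq.pxmrr6j}. Writing $w=\frac{\sqrt5 F_j}{L_j}$, the two identities \eqref{eq.xludrew1} and \eqref{eq.aneantm} become
\begin{equation*}
\sum_{k=0}^n \binom nk \frac{2^k F_{jk}}{L_j^k} B_{n-k} = \frac{1}{\sqrt5}\Big(B_n\big(\tfrac{2\alpha^j}{L_j}\big) - B_n\big(\tfrac{2\beta^j}{L_j}\big)\Big),
\qquad
\sum_{k=0}^n \binom nk \frac{2^k L_{jk}}{L_j^k} B_{n-k} = B_n\big(\tfrac{2\alpha^j}{L_j}\big) + B_n\big(\tfrac{2\beta^j}{L_j}\big).
\end{equation*}

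The decisive step, and the one I expect to be the only point requiring care, is to identify the two arguments. Using the Binet forms $\alpha^j=\frac{L_j+\sqrt5 F_j}{2}$ and $\beta^j=\frac{L_j-\sqrt5 F_j}{2}$, one finds $\frac{2\alpha^j}{L_j}=1+w$ and $\frac{2\beta^j}{L_j}=1-w$. This is what makes the functional relations \eqref{1FuncRel-B} and \eqref{2FuncRel-B} directly applicable: the seemingly awkward arguments collapse to $1\pm w$, which is precisely the form on which those relations act.

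I would then apply \eqref{1FuncRel-B} in the form $B_n(1+w)=B_n(w)+nw^{n-1}$ together with \eqref{2FuncRel-B} in the form $B_n(1-w)=(-1)^n B_n(w)$, giving
\begin{equation*}
B_n(1+w)-B_n(1-w)=\big(1-(-1)^n\big)B_n(w)+nw^{n-1},
\qquad
B_n(1+w)+B_n(1-w)=\big(1+(-1)^n\big)B_n(w)+nw^{n-1}.
\end{equation*}
Finally, invoking the stated parity restrictions finishes the argument: for even $n$ the coefficient $1-(-1)^n$ vanishes, so the Fibonacci sum equals $\frac{1}{\sqrt5}\,nw^{n-1}=\frac{n}{\sqrt5}\big(\frac{\sqrt5 F_j}{L_j}\big)^{n-1}$, yielding \eqref{eq.v7yjuc8}; for odd $n$ the coefficient $1+(-1)^n$ vanishes, so the Lucas sum equals $nw^{n-1}=n\big(\frac{\sqrt5 F_j}{L_j}\big)^{n-1}$, yielding \eqref{eq.pxmrr6j}. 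There is no genuine obstacle beyond spotting the substitution $z=2/L_j$ and the simplification $2\alpha^j/L_j=1+w$, $2\beta^j/L_j=1-w$ that triggers the parity-driven cancellation of the $B_n(w)$ term.
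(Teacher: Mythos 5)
Your proof is correct and follows essentially the same route as the paper: both specialize Corollary~\ref{cor.wxu7h9l} at $m=0$, $z=2/L_j$, and both apply \eqref{1FuncRel-B} and \eqref{2FuncRel-B} at the arguments $\tfrac{2\alpha^j}{L_j}=1+w$ and $\tfrac{2\beta^j}{L_j}=1-w$ with $w=\tfrac{\sqrt5 F_j}{L_j}$, the only cosmetic difference being that you carry the factors $1\mp(-1)^n$ and invoke parity at the end, while the paper splits into the even and odd cases from the start.
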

\begin{proof} Setting $x=\frac{2\alpha^j}{L_j} - 1$  
	in \eqref{1FuncRel-B} yields 
	\begin{equation}\label{eq.b26z27o}
	B_n \Big(\frac{2\alpha ^j}{L_j}\Big) - 
	B_n \Big(\frac{F_j \sqrt 5}{L_j}\Big) = n\Bigl( {\frac{{F_j \sqrt 5 }}{{L_j }}} \Bigr)^{n - 1}.
	\end{equation}
	Setting $x=\frac{2\beta^j}{L_j}$ in \eqref{2FuncRel-B} gives
	\begin{equation}\label{eq.o4jl58o}
	B_n \Big(\frac{2\beta ^j }{L_j}\Big) - B_n \Big(\frac{F_j \sqrt 5}{ L_j}\Big) = 0,\qquad\mbox{$n$ even}.
	\end{equation}
	From \eqref{eq.b26z27o} and \eqref{eq.o4jl58o} we find
	\begin{equation*}
	B_n \Big(\frac{2\alpha ^j } {L_j }\Big) - B_n \Big(\frac{2\beta ^j} { L_j}\Big) = n\Bigl( {\frac{{F_j \sqrt 5 }}{{L_j }}} \Bigr)^{n - 1},\qquad\mbox{$n$ even},
	\end{equation*}
	from which, upon use in \eqref{eq.xludrew1}, with $m=0$ and $z=\frac{2}{L_j}$, identity \eqref{eq.v7yjuc8} follows.
	
	Using $x=\frac{2\beta^j}{L_j}$ in \eqref{2FuncRel-B} gives
	\begin{equation}\label{eq.bt3cihl}
	B_n \Big(\frac{2\beta ^j }{L_j }\Big) + B_n \Big(\frac{F_j \sqrt 5 } {L_j }\Big) = 0,\qquad\mbox{$n$ odd}.
	\end{equation}
	Addition of~\eqref{eq.b26z27o} and~\eqref{eq.bt3cihl} produces
	\begin{equation*}
	B_n \Big(\frac{2\alpha ^j }{L_j }\Big) + B_n \Big(\frac{2\beta ^j }	 {L_j }\Big) = n\Bigl( {\frac{{F_j \sqrt 5 }}{L_j }} \Bigr)^{n - 1},\qquad\mbox{$n$ odd},
	\end{equation*}
	from which, upon use in \eqref{eq.aneantm}, with $m=0$ and $z=\frac{2}{L_j}$, identity \eqref{eq.pxmrr6j} follows.
\end{proof}

The result of the next theorem exhibits strong similarity to the polynomial identi\-ties from Introduction.
\begin{theorem} \label{thm11} 
	The following identity is valid for all $n\geq 0$, $j\geq 1$, and complex $x$:
	\begin{equation}\label{Th8}
	\begin{split}
	\sum_{k=0}^n {n\choose k} 2^k F_{jk} (\pm \sqrt{5}F_j)^{n-k} &B_{n-k}(x) \\
	= n & F_j \Big ((\pm\sqrt{5}F_j x  + L_{j} )^{n-1}
	+ (\pm\sqrt{5}F_j (x-1)  + L_{j} )^{n-1}\Big ).
	\end{split}
	\end{equation}
\end{theorem}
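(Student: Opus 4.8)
The plan is to reduce the claim directly to the identity \eqref{Th4-1} of the preceding theorem specialized to $m=0$, namely
\[
\sum_{k=0}^n \binom{n}{k} F_{jk} B_{n-k}(x)\, z^k = \frac{1}{\sqrt5}\big(B_n(x+\alpha^j z) - B_n(x+\beta^j z)\big),
\]
by choosing $z$ so that the two arguments of $B_n$ land on the integer lattice. First I would pull the factor $(\pm\sqrt5 F_j)^n$ out of the left-hand side of \eqref{Th8}, rewriting it as
\[
(\pm\sqrt5 F_j)^n \sum_{k=0}^n \binom{n}{k} F_{jk}\, B_{n-k}(x)\Big(\frac{2}{\pm\sqrt5 F_j}\Big)^k,
\]
and then recognize the inner sum as \eqref{Th4-1} (with $m=0$) evaluated at $z = \frac{2}{\pm\sqrt5 F_j}$.

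Next I would compute the shifted arguments from the Binet expressions $\alpha^j = \frac{L_j + \sqrt5 F_j}{2}$ and $\beta^j = \frac{L_j - \sqrt5 F_j}{2}$. In the upper ($+$) case, with $z=\frac{2}{\sqrt5 F_j}$, one gets $\alpha^j z = \frac{L_j}{\sqrt5 F_j} + 1$ and $\beta^j z = \frac{L_j}{\sqrt5 F_j} - 1$, so the arguments $x+\alpha^j z$ and $x+\beta^j z$ differ by exactly $2$ (by $-2$ in the lower case). This is the crux of the argument: the coefficient $2$ in front of $F_{jk}$ is precisely what makes $\sqrt5 F_j\cdot z = 2$, turning the analytic difference $B_n(x+\alpha^j z)-B_n(x+\beta^j z)$ into a difference of arguments separated by an integer, where the functional relation \eqref{1FuncRel-B} is applicable.

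The final step is telescoping. Setting $w = x+\beta^j z$, so that $x+\alpha^j z = w+2$ in the $+$ case, two applications of \eqref{1FuncRel-B} give
\[
B_n(w+2) - B_n(w) = n(w+1)^{n-1} + n\,w^{n-1},
\]
where $w+1 = x + \frac{L_j}{\sqrt5 F_j}$ and $w = (x-1) + \frac{L_j}{\sqrt5 F_j}$. Substituting back, and clearing powers of $\sqrt5 F_j$ via $\big(x+\frac{L_j}{\sqrt5 F_j}\big)^{n-1} = (\sqrt5 F_j)^{-(n-1)}(\sqrt5 F_j\,x + L_j)^{n-1}$, the prefactor $(\sqrt5 F_j)^n\cdot\frac{1}{\sqrt5}\cdot(\sqrt5 F_j)^{-(n-1)}$ collapses to $F_j$, reproducing exactly the right-hand side of \eqref{Th8}. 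The lower sign is handled identically: the arguments now differ by $-2$, contributing an overall minus that is absorbed by $(-\sqrt5 F_j)^n/(-\sqrt5 F_j)^{n-1} = -\sqrt5 F_j$. I do not anticipate any genuine difficulty here, since the whole argument is a single substitution into \eqref{Th4-1} followed by the two-step telescoping \eqref{1FuncRel-B}; the only care needed is the bookkeeping of signs and powers of $\sqrt5 F_j$ across the $\pm$ cases, together with the trivial check that $n=0$ yields $0=0$ (the sole surviving term vanishes because $F_0=0$, and $nF_j=0$).
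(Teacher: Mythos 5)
Your proof is correct, but it follows a genuinely different route from the paper's. The paper proves \eqref{Th8} by a generating-function computation: starting from $H(x,\sqrt{5}F_j z) = \frac{\sqrt{5}F_j z\, e^{\frac{\sqrt{5}F_j}{2}(2x-1)z}}{2\sinh\left(\frac{\sqrt{5}F_j}{2}z\right)}$, it forms the functional equation $F(z)L(z)H(x,\sqrt{5}F_j z) = 2F_j z\, e^{L_j z} e^{\frac{\sqrt{5}F_j}{2}(2x-1)z}\cosh\big(\frac{\sqrt{5}F_j}{2}z\big)$, expands the left-hand side using the convolution $\sum_{k}\binom{n}{k}F_{jk}L_{j(n-k)} = 2^n F_{jn}$ from Lemma \ref{lem1}, rewrites the right-hand side as $F_j z\big(e^{(\sqrt{5}F_j x + L_j)z} + e^{(\sqrt{5}F_j(x-1)+L_j)z}\big)$, and compares coefficients of $z^n/n!$; the lower sign is then obtained for free by substituting $x \mapsto 1-x$ and invoking the reflection formula \eqref{2FuncRel-B}. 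Your argument instead recycles the already-proven identity \eqref{Th4-1} (with $m=0$ and $z = 2/(\pm\sqrt{5}F_j)$, legitimate since $F_j \neq 0$ for $j\geq 1$) and finishes with two applications of the difference equation \eqref{1FuncRel-B}; your computation checks out in both sign cases, including the power-of-$\sqrt{5}F_j$ bookkeeping and the degenerate case $n=0$. What your route buys is conceptual economy — no new generating-function manipulation is needed, and it makes visible that the coefficient $2^k$ is precisely what forces the two Bernoulli arguments to differ by the integer $2$, so that telescoping via \eqref{1FuncRel-B} applies. What the paper's route buys is independence from \eqref{Th4-1}, so that Theorem \ref{thm11} rests only on the Section 2 machinery, together with a slicker disposal of the $\pm$ ambiguity via the symmetry $B_n(1-x)=(-1)^n B_n(x)$ — a trick you could also have borrowed to halve your case analysis instead of redoing the sign bookkeeping for the lower sign.
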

\begin{proof} Since
	\begin{equation*}
	H(x,\sqrt{5} F_j z) = \frac{\sqrt{5} F_je^{\frac{\sqrt5F_j}{2} (2x-1)z}}{2\sinh\!\big ( \frac{\sqrt{5}F_j}{2} z\big )}z,
	\end{equation*}
	we get the relation
	\begin{equation*}
	F(z) L(z) H(x,\sqrt{5} F_j z) = 2 F_j z e^{L_j z} e^{\frac{\sqrt5F_j}{2}(2x-1)z} \cosh\Big ( \frac{\sqrt{5}F_j}{2} z\Big ).
	\end{equation*}
	Hence,  
	\begin{align*}
	\sum_{n=0}^\infty \sum_{k=0}^n {n\choose k}  2^k F_{jk} (\sqrt{5}F_j)^{n-k} B_{n-k}(x) \frac{z^n}{n!}& = F_j z e^{\big(\frac{\sqrt{5}F_j}{2}(2x-1) + L_j\big) z}\Big (e^{\frac{\sqrt{5}F_j}{2} z} + e^{-\frac{\sqrt{5}F_j}{2}z}\Big ) \\
	& = F_j z \Big (e^{(\sqrt{5}F_j x + L_j) z} + e^{(\sqrt{5}F_j(x-1) + L_j)z}\Big ).
	\end{align*}
	This proves \eqref{Th8} with the positive root. The second follows upon replacing $x$ by $1-x$ and using \eqref{2FuncRel-B}.
\end{proof}

Setting $x=0$ in Theorem \ref{thm2}, we have the following.
\begin{corollary}\label{cor21} 	For $n\geq 0$ and $j\geq 1$, 
	\begin{equation*}\label{eqcor21}
	\sum_{k=0}^n {n\choose k} 2^k F_{jk} (\pm \sqrt{5}F_j)^{n-k} B_{n-k} = n F_j \Big (L_{j}^{n-1} + (\mp\sqrt{5}F_j + L_{j} )^{n-1}\Big ).
	\end{equation*}
\end{corollary}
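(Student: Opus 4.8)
The plan is to obtain the corollary as the immediate $x=0$ specialization of identity \eqref{Th8}, established in the preceding theorem. Since \eqref{Th8} is asserted for every complex $x$, I am free to evaluate both of its sides at $x=0$. The only auxiliary fact needed is that the Bernoulli polynomials reduce to the Bernoulli numbers at the origin, i.e.\ $B_m(0)=B_m$; this is recorded in the introduction immediately after \eqref{eq.qr64rlf}, where the $B_m=B_m(0)$ are defined (equivalently, it is the $x=0$ instance of \eqref{eq.qr64rlf}).

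First I would substitute $x=0$ on the left-hand side of \eqref{Th8}. Each factor $B_{n-k}(x)$ collapses to $B_{n-k}(0)=B_{n-k}$, so the sum becomes
\[
\sum_{k=0}^n \binom{n}{k} 2^k F_{jk} (\pm\sqrt{5}F_j)^{n-k} B_{n-k},
\]
which is exactly the left-hand side of the corollary. Next I would substitute $x=0$ on the right-hand side: the first bracketed term becomes $(\pm\sqrt{5}F_j\cdot 0 + L_j)^{n-1}=L_j^{n-1}$, independently of the chosen sign, while the second becomes $(\pm\sqrt{5}F_j(0-1)+L_j)^{n-1}=(\mp\sqrt{5}F_j+L_j)^{n-1}$. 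Hence the right-hand side reduces to $nF_j\big(L_j^{n-1}+(\mp\sqrt{5}F_j+L_j)^{n-1}\big)$, precisely the right-hand side of the corollary.

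The argument is entirely mechanical and presents no genuine obstacle; the only point meriting a moment's care is the bookkeeping of the paired signs. Choosing the top sign $(+)$ in the summand forces the contribution $\sqrt{5}F_j(x-1)$ to become $-\sqrt{5}F_j$ at $x=0$, so the second term on the right carries the bottom branch of $\mp$, and symmetrically for the bottom choice. This matches the $\pm/\mp$ convention already built into the statement, so both sign cases are disposed of simultaneously. I would also note in passing that the reference \eqref{thm1-2}-style pointer ``Theorem \ref{thm2}'' in the statement is best read as a reference to the theorem supplying \eqref{Th8}, since it is that identity, rather than the $\tanh$--$\coth$ relations, whose $x=0$ instance yields the corollary.
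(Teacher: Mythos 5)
Your proof is correct and is exactly the paper's own argument: the corollary is obtained by setting $x=0$ in \eqref{Th8} and using $B_{n-k}(0)=B_{n-k}$, with the $\pm/\mp$ signs tracking just as you describe. Your closing remark is also right that the paper's phrase ``Setting $x=0$ in Theorem \ref{thm2}'' is a mislabel for the theorem containing \eqref{Th8}, namely Theorem \ref{thm11}.
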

\begin{corollary}
	For $n\geq 0$ and $j\geq 1$, 
	\begin{align*}
	\sum_{k=0}^n {n\choose k} 2^k F_{jk} (\sqrt{5}F_j)^{n-k} B_{n-k}(\alpha) n F_j 2^{1-n} \Big ( (\sqrt{5} F_j + L_{j+3} )^{n-1} + (-\sqrt{5}F_j + L_{j+3} )^{n-1}\Big ),
	\end{align*}
	where $\alpha$ is the golden ratio. Also, for $j\geq 3$, we have the analog identity
	\begin{equation}
	\begin{split}
	\label{eq2cor_22}
	\sum_{k=0}^n {n\choose k} 2^k F_{jk} (-\sqrt{5}F_j)^{n-k}& B_{n-k}(\alpha)\\ = 
	n F_j 2^{1-n} &\Big ( (\sqrt{5} F_j - L_{j-3})^{n-1} + (-\sqrt{5}F_j - L_{j-3} )^{n-1}\Big ).
	\end{split}
	\end{equation}
\end{corollary}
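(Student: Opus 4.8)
The plan is to obtain both displays as the single specialization $x = \alpha$ of Theorem~\ref{thm11}, taking the upper ($+$) sign for the first identity and the lower ($-$) sign for the second. Setting $x = \alpha$ in \eqref{Th8} reproduces the left-hand sides verbatim, so the entire task reduces to rewriting the two base quantities appearing on the right-hand side of \eqref{Th8}, namely $\pm\sqrt5 F_j\,\alpha + L_j$ and $\pm\sqrt5 F_j\,(\alpha-1) + L_j$, in terms of $\sqrt5 F_j$ and $L_{j\pm 3}$.

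First I would record the elementary consequences of $\alpha = \tfrac{1+\sqrt5}{2}$, namely $2\alpha = 1+\sqrt5$ and $2(\alpha - 1) = \sqrt5 - 1$. Doubling the first pair of base quantities then yields
\begin{equation*}
2\big(\sqrt5 F_j\,\alpha + L_j\big) = \sqrt5 F_j + \big(5F_j + 2L_j\big),\qquad
2\big(\sqrt5 F_j\,(\alpha-1) + L_j\big) = -\sqrt5 F_j + \big(5F_j + 2L_j\big).
\end{equation*}
The key step is the recognition $5F_j + 2L_j = L_{j+3}$, which is the instance $r = j$, $s = 3$ of the identity $L_r L_s + 5F_rF_s = 2L_{r+s}$ quoted in the preceding proof (here $L_3 = 4$ and $F_3 = 2$). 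Substituting these evaluations into \eqref{Th8}, each raised to the power $n-1$, produces a common factor $2^{-(n-1)} = 2^{1-n}$, and the first identity follows.

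For the second identity I would proceed identically with the lower sign. Doubling the base quantities now gives
\begin{equation*}
2\big({-}\sqrt5 F_j\,\alpha + L_j\big) = -\sqrt5 F_j - \big(5F_j - 2L_j\big),\qquad
2\big({-}\sqrt5 F_j\,(\alpha-1) + L_j\big) = \sqrt5 F_j - \big(5F_j - 2L_j\big).
\end{equation*}
Here the relevant recognition is $5F_j - 2L_j = L_{j-3}$, which is the same identity $L_rL_s + 5F_rF_s = 2L_{r+s}$ taken at $r = j$, $s = -3$, using $L_{-3} = -L_3 = -4$ and $F_{-3} = F_3 = 2$. This forces the restriction $j \geq 3$, so that the index $j-3$ remains nonnegative. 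Again the $(n-1)$-th powers contribute the factor $2^{1-n}$, and after reordering the two summands we recover \eqref{eq2cor_22}.

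I expect no genuine obstacle: the argument is purely a specialization plus the two Lucas evaluations $L_{j\pm3} = 5F_j \pm 2L_j$. The only points requiring care are the bookkeeping of the signs attached to $\sqrt5 F_j$ in the two summands and the extraction of the $2^{1-n}$ prefactor from the $(n-1)$-th powers; I would also note in passing that the equality sign is inadvertently missing from the first displayed identity in the corollary statement.
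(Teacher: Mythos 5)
Your proof is correct and takes essentially the same route as the paper: the paper also obtains both identities by setting $x=\alpha$ in Theorem \ref{thm11} and simplifying, merely using the equivalent Lucas identity $5F_n=L_{n+1}+L_{n-1}$ (instead of $L_rL_s+5F_rF_s=2L_{r+s}$) to arrive at the same evaluations $5F_j\pm 2L_j=L_{j\pm3}$. Your side remarks — the missing equals sign in the first display and the role of $j\geq 3$ — are also accurate.
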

\begin{proof} Set $x=\alpha$ in Theorem \ref{thm2} and simplify using $5F_n=L_{n+1}+L_{n-1}$.
\end{proof}

We mention the special case of \eqref{eq2cor_22} for $j=3$:
\begin{equation*}
\sum_{k=0}^n {n\choose k}  (-\sqrt{5})^{n-k} F_{3k}B_{n-k}(\alpha) = (-1)^{n-1}n L_{n-1}.
\end{equation*}

Also, inserting $\beta$ in \eqref{Th8} and setting $j=1$ we can state the identity
\begin{equation*}
\sum_{k=0}^n {n\choose k} 2^k F_{k} (\sqrt{5})^{n-k} B_{n-k}(\beta) = (-1)^{n-1}n L_{2n-2}.
\end{equation*}
\begin{corollary}
	Let $n$, $j$ and $q$ be integers with $n, j\geq 1$ and $q\geq 2$. Then
	\begin{align*}
	\sum_{k=0}^{n} {n\choose k} 2^k F_{jk}&  (\pm \sqrt{5}F_j )^{n-k} \big ( q^{1-(n-k)} - 1 \big ) B_{n-k}\\
	= n F_j& q^{1-n} \sum_{r=1}^{q-1} \Big ( (\pm\sqrt{5} F_j r + q L_j )^{n-1} + (\pm\sqrt{5} F_j (r-q) + q L_j )^{n-1}\Big ).
	\end{align*}
\end{corollary}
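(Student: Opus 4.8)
The plan is to split the summand according to the factor $q^{1-(n-k)}-1$, treating the two resulting pieces separately: one is recognized directly as Corollary~\ref{cor21}, and the other is reduced by means of the Raabe multiplication formula~\eqref{Raabe-B}. Writing the left-hand side as $S_1-S_2$, where
\[
S_1=\sum_{k=0}^n\binom nk 2^kF_{jk}(\pm\sqrt5F_j)^{n-k}q^{1-(n-k)}B_{n-k},\qquad
S_2=\sum_{k=0}^n\binom nk 2^kF_{jk}(\pm\sqrt5F_j)^{n-k}B_{n-k},
\]
I would first observe that $S_2$ is exactly the left-hand side of Corollary~\ref{cor21}, hence $S_2=nF_j\big(L_j^{n-1}+(\mp\sqrt5F_j+L_j)^{n-1}\big)$.

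For $S_1$ the key step is to apply~\eqref{Raabe-B} with $x=0$ and $m=q$, which gives $B_m=q^{m-1}\sum_{r=0}^{q-1}B_m\big(\tfrac rq\big)$, equivalently $q^{1-m}B_m=\sum_{r=0}^{q-1}B_m\big(\tfrac rq\big)$. Taking $m=n-k$ converts the scalar factor $q^{1-(n-k)}B_{n-k}$ into $\sum_{r=0}^{q-1}B_{n-k}\big(\tfrac rq\big)$. After interchanging the two summations, the inner sum over $k$ is precisely the left-hand side of~\eqref{Th8} with $x=\tfrac rq$, so Theorem~\ref{thm11} applies term by term and yields
\[
S_1=nF_j\sum_{r=0}^{q-1}\Big(\big(\pm\sqrt5F_j\tfrac rq+L_j\big)^{n-1}+\big(\pm\sqrt5F_j(\tfrac rq-1)+L_j\big)^{n-1}\Big).
\]

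Clearing the denominator inside each $(n-1)$st power extracts a common factor $q^{1-n}$, so that $S_1=nF_jq^{1-n}\sum_{r=0}^{q-1}\big((\pm\sqrt5F_jr+qL_j)^{n-1}+(\pm\sqrt5F_j(r-q)+qL_j)^{n-1}\big)$. The decisive observation is that the $r=0$ term equals $q^{n-1}\big(L_j^{n-1}+(\mp\sqrt5F_j+L_j)^{n-1}\big)$, so after multiplication by $nF_jq^{1-n}$ it reproduces $S_2$ exactly. Forming $S_1-S_2$ therefore cancels the $r=0$ term and shrinks the range of summation to $1\le r\le q-1$, which is the claimed identity. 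The only genuinely non-routine point is recognizing that Raabe's formula turns the inhomogeneous coefficient $q^{1-(n-k)}$ into an average of Bernoulli-polynomial values, and that the resulting $r=0$ contribution is exactly the Corollary~\ref{cor21} term being subtracted; the remaining manipulations are routine bookkeeping with powers of $q$.
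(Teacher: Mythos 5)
Your proof is correct and is essentially the paper's own proof: both rest on Raabe's multiplication formula \eqref{Raabe-B} specialized at $x=0$, $m=q$, followed by a term-by-term application of Theorem \ref{thm11} with $x=\tfrac{r}{q}$ and extraction of the factor $q^{1-n}$. The only difference is bookkeeping: the paper uses Raabe in the form $(q^{1-n}-1)B_n=\sum_{r=1}^{q-1}B_n\bigl(\tfrac{r}{q}\bigr)$, where the $r=0$ term has already been moved to the left-hand side, so it never needs Corollary \ref{cor21} or your explicit cancellation of the $r=0$ contribution against $S_2$.
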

\begin{proof}  Formula \eqref{Raabe-B} gives
	\begin{equation*}
	\big ( q^{1-n} - 1 \big ) B_{n} = \sum_{r=1}^{q-1} B_n \Big (\frac{r}{q}\Big ).
	\end{equation*}
	Therefore, we can write
	\begin{align*}
	\sum_{k=0}^{n} {n\choose k}  2^k F_{jk} & (\pm \sqrt{5}F_j)^{n-k} \big ( q^{1-(n-k)} - 1 \big ) B_{n-k} \\
	& = n F_j \sum_{r=1}^{q-1} \Big ( \big (\pm\sqrt{5}F_j\,\frac{r}{q} + L_j \big )^{n-1} 
	+ \big (\pm\sqrt{5}F_j\,\big (\frac{r}{q}-1\big ) + L_j \big )^{n-1} \Big ) \\
	& = n F_j q^{1-n} \sum_{r=1}^{q-1} \Big ( (\pm\sqrt{5} F_j r + q L_j )^{n-1} + (\pm\sqrt{5} F_j (r-q) + q L_j )^{n-1}\Big ).
	\end{align*}
\end{proof}

We proceed with some examples. The special case $q=2$ takes the form
\begin{align*}
\sum_{k=0}^{n} {n\choose k} 2^k F_{jk} (\sqrt{5}F_j)^{n-k} \big ( 2^{1-(n-k)} - 1 \big ) B_{n-k} & = n F_j 2^{1-n} \Big ( (L_j + 2\alpha^j)^{n-1} + (L_j + 2\beta^j )^{n-1}\Big ) \\
& = n F_j 2^{1-n} \sum_{m=0}^{n-1} {n-1\choose m} 2^m L_{jm} L_j^{n-1-m}.
\end{align*}

For $j=1$ the left-hand side can be expressed in closed-form and we obtain after some manipulations
\begin{equation*}
\sum_{k=0}^{n} {n\choose k}  \Bigl(\frac{\sqrt5}{4}\Bigr)^{k} \big ( 2-2^k \big ) F_{n-k} B_{k} = \frac{nL_{3(n-1)}}{2^{2n-1}}.
\end{equation*}

Similarly the case $q=3$ is treated. The calculations are lengthy and omitted. The result is
\begin{align*}
\sum_{k=0}^{n} {n\choose k} 6^k  \big (1- 3^{n-k-1} \big ) F_{jk} (\sqrt{5}F_j)^{n-k}B_{n-k}= n F_j \sum_{m=0}^{n-1}x {n-1\choose m} \big ( 2^{n-1} 
+  4^m  \big )L_j^{n-1-m}L_{jm}.
\end{align*}

For $j=1$ we get
\begin{align*} 
\sum_{k=0}^{n} {n\choose k}  6^k  (\sqrt{5})^{n-k} \big ( {1-3^{n-k-1}}\big )F_{k} B_{n-k}
= n 2^{n-1}  L_{2n-2} + \sum_{m=1}^{n} {n\choose m} m4^{m-1} L_{m-1}.
\end{align*}

\section{Conclusion}
In this paper, we have discovered new identities relating Bernoulli polynomials  (numbers) to Fibonacci and Lucas numbers. In our future papers, we will discuss the analogue results for Euler polynomials (numbers) and Fibonacci and Lucas numbers as well as identities connecting Bernoulli polynomials (numbers) with Jacobsthal, Pell and balancing  numbers.

\bigskip
\hrule
\bigskip

\noindent 2020 {\it Mathematics Subject Classification}: Primary 11B68; Secondary 11B39, 05A15.
\bigskip

\noindent \emph{Keywords:} Bernoulli numbers and polynomials,	Fibonacci sequence, Lucas sequence, recurrence, generating function.


\begin{thebibliography}{99}

\bibitem{adegoke20x} 
K.~Adegoke, \textit{Fibonacci series from power series}, Notes Number Theory Discrete Math. \textbf{27} (2021), no. 3, 44--62.

\bibitem{adegoke_Conf}
K.~Adegoke, R.~Frontczak, T.~Goy, \textit{Relations involving Fibonacci numbers and Bernoulli numbers and polynomials}, Int. Conf. ``Current Trends in Abstract and Applied Analysis'' (May 12--15, 2022, Ivano-Frankivsk, Ukraine), 2022, 2--3.

\bibitem{Byrd}
P.~F.~Byrd, \textit{Relations between Euler and Lucas numbers}, Fibonacci Quart. \textbf{13} (1975), no. 2, 111--114.

\bibitem{Castellanos}
D.~Castellanos, \textit{A generalization of Binet's formula and some of its consequences}, Fibonacci Quart. \textbf{27} (1989), no. 5, 424--438. 

\bibitem{RF-JIS}
R.~Frontczak, \textit{Relating Fibonacci numbers to Bernoulli numbers via balancing polynomials}, J. Integer Seq. \textbf{22} (2019), Article 19.5.3.

\bibitem{RF-FQ}
R.~Frontczak, \textit{Problem H-860}, Fibonacci Quart. \textbf{58}, no.  3, 281.

\bibitem{RFTG1}
R.~Frontczak, T.~Goy, \textit{More Fibonacci-Bernoulli relations with and without balancing polynomials}, Math. Commun. \textbf{26} (2021), no. 2, 215--226. 

\bibitem{RFTG2}
R.~Frontczak, T.~Goy, \textit{Lucas-Euler relations using balancing and Lucas-balancing polynomials}, Kyungpook Math. J. \textbf{61} (2021), no. 3, 473--486.

\bibitem{RF-Tomovski}
R.~Frontczak, Z.~Tomovski, \textit{Generalized Euler-Genocchi polynomials and Lucas numbers}, Integers \textbf{20} (2020), 20, \#A52. 

\bibitem{GuoChu}
D. Guo, W. Chu, \textit{Hybrid convolutions on Pell and Lucas polynomials}, Discrete Math. Lett. \textbf{7} (2021), 44--51.

\bibitem{Jeff}
A. Jeffrey, H. Dai, \textit{Handbook of Mathematical Formulas and Integrals}. Elsevier, 2008. 

\bibitem{Koshy}
T. Koshy, \textit{Fibonacci and Lucas Numbers with Applications} (in 2 volumes). John Wiley \& Sons, 2018, 2019.
 

\bibitem{Sloane}
N.~J.~A. Sloane (ed.), \textit{The On-Line Encyclopedia of Integer Sequences}. Published electronically at \texttt{https://oeis.org}.

\bibitem{Vajda}
S.~Vajda, \textit{Fibonacci \& Lucas Numbers, and the Golden Section. Theory and Applications}, Ellis Horwood, 1989. 

\bibitem{Young}
P.~T.~Young, \textit{Congruences for Bernoulli-Lucas sums}, Fibonacci Quart. \textbf{55} (2017), no.~5, 201--212. 

\bibitem{Zhang-Ma}
T.~Zhang, Y.~Ma, \textit{On generalized Fibonacci polynomials and Bernoulli numbers}, J.~Integer Seq. \textbf{8} (2005), Article 05.5.3.

\bibitem{Zhang}
Z.~Zhang, \textit{Recurrence sequences and N\"orlund-Bernoulli polynomials}, Math. Morav. \textbf{2} (1998), 161--168.

\bibitem{Zhang-Guo}
Z. Zhang, L. Guo, \textit{Recurrence sequences and Bernoulli polynomials of higher order}, Fibonacci Quart. \textbf{33} (1995), no. 4, 359--362.




\end{thebibliography}
\end{document}